\newtheorem{theorem}{Theorem}[section]
\newtheorem{lemma}[theorem]{Lemma}
\newtheorem{proposition}[theorem]{Proposition}
\newtheorem*{claim}{Claim}
\newtheorem{corollary}[theorem]{Corollary}
\theoremstyle{definition}
\newtheorem{definition}[theorem]{Definition}
\theoremstyle{remark}
\newtheorem{remark}[theorem]{Remark}
\newtheorem{example}[theorem]{Example}
\DeclareMathOperator{\FR}{FR}
\DeclareMathOperator{\id}{id}
\DeclareMathOperator{\fr}{FR}
\DeclareMathOperator{\ot}{OT}
\DeclareMathOperator{\Th}{Th}
\DeclareMathOperator{\subalg}{Sub}
\DeclareMathOperator{\ult}{Ult}
\begin{document}

\title[Structural Considerations of Ramsey Algebras]{Structural Considerations of Ramsey Algebras}
\author{Zu Yao Teoh}
\address{School of Mathematical Sciences\\
Universiti Sains Malaysia\\
11800 USM, Malaysia}
\email{teohzuyao@gmail.com}
\date{\today}

\begin{abstract}
Ramsey algebras is an attempt to investigate Ramsey spaces generated by algebras in a purely combinatorial fashion. Previous studies have focused on the basic properties of Ramsey algebras and the study of a few specific examples. In this article, we study the properties of Ramsey algebras from a structural point of view. For instance, we will see that isomorphic algebras have the same Ramsey algebraic properties, but elementarily equivalent algebras need not be so, as expected. Answer to a long-standing question about the Cartesian products of Ramsey algebras is also given.
% \PACS{05C55 \and 03E05}
% \subclass{MSC code1 \and MSC code2 \and more}
\end{abstract}

\keywords{Ramsey algebra, Hindman's theorem,   Ramsey space, Ellentuck Space}
\subjclass[2000]{03B80, 05D10, 03E05}
\maketitle

%God bless!

\section{Introduction}
In \cite{carlson1988some}, Carlson introduced the notion of a Ramsey space, now called topological Ramsey space following Todorocevic extension of the work \cite{todorocevic}. When the Ramsey space is generated by an algebra, Carlson has suggested that  a purely combinatorial approach might be possible. In his doctoral work, Teh pursued this theme and the study has since been known as Ramsey algebra.

An algebra is a structure consisting of a family of sets, called the domains of the algebra, and a family of operations on these sets. For instance, groups $(G, \circ)$ as we know them are algebras. So are rings and fields $(R, +, \times)$. Groups, rings, and fields are what we refer to as \emph{homogeneous} algebras. Such algebras have only one type of domain, $G$ or $R$ as indicated above. Such is contrasted with \emph{heterogeneous} algebras, a typical example which is a vector space, where its domains consist of the set of vectors as well as the set of scalars. Of course, homogeneous algebras are special cases of heterogeneous algebras.

Our concern in this paper is solely on homogeneous algebras. Arguably, Ramseyan theme combinatorics on algebras has roots in Hindman's Theorem. It states that, for each positive integer $r$ and each coloring $c:\mathbb{Z}^+\to\{1, \ldots, r\}$, there exists an infinite $S\subseteq\mathbb{Z}^+$ such
that $c$ is constant on the set $\text{FS}=\left\{\sum_{i\in F} i:F\subseteq S, F\:\text{is finite}\right\}$. The same theorem can be cast in terms of finite
subsets of the natural numbers \cite{milliken1975ramsey}. Considering questions about infinite sets along the same line, a result of Erd\"{o}s-Rado
\cite{erdos1952combinatorial} shows that not all sets of reals possess similar property. The counterexample considered by Erd\"{o}s-Rado made use of the axiom of choice and is hence non-constructive. As a result, the question arises as to whether sets definable in some ways can possess some Ramseyan property. Galvin-Prikry
\cite{GP73} showed that the Borel sets do, whereas, using the method of forcing, Silver \cite{silver1970every} showed that the analytic sets have the
desired property.

Ellentuck \cite{ellentuck1974new} came into the scene by showing that the result of Silver could also be proved using a topological formulation without having to appeal to
metamathematical methods. In his proof, Ellentuck introduced what is to be called the Ellentuck topology. Seeing the potential in such an argument, Carlson \cite{carlson1988some} arrived at a generalization, in which he introduced and developed the notion of a (topological) Ramsey space. The power of Ramsey spaces lies in its ability to derive as corollaries classical
results such as the Hales-Jewett Theorem and Hindman's Theorem, which Carlson provided in that same paper.

If one is to ask whether a given structure is a Ramsey space, the definition requires that one checks some topological properties of the space. However,
Carlson's abstract version of Ellentuck's Theorem turns a topological question into a combinatorial one (cf. \cite{carlson1988some} and \cite{teoh2016vector}). Some early works on Ramsey algberas include \cite{teh2016ramsey}, \cite{wcT13b}, \cite{wcTIUF}, and \cite{teohoctonions}. Two articles on heterogeneous Ramsey algebras can be found in \cite{teoh2016vector} and \cite{teoh2017matrices}. The two doctoral theses \cite{tehthesis} and \cite{teohthesis} may also be of interest.

In this paper, we investigate some structural questions pertaining to Ramsey algebras. In particular, we study the property of being Ramsey under the notions of elementary equivalence and elementary extension. The direct limit of a family of Ramsey algebras is also investigated in conjunction with that. We also give a partial answer to a question since the inception of Ramsey algebras, a question that pertains to the Cartesian products of Ramsey algebras.

We fix some symbols and conventions. $\omega$ will denote the set of natural numbers $0, 1, 2, \ldots$. If $D$ is a set, then $\id_D$ denotes the identity function on $D$. If $\sigma$ is a finite sequence, then $|\sigma|$ denotes the length of the sequence. If $A$ is
a nonempty set, an infinite sequence of $A$ is an element of ${^\omega}A$. A sequence, finite or infinite, will be denoted by an arrow over a Roman alphabet such as $\vec{b}$; if the elements are to be listed, they will be enclosed in angled brackets such as $\langle a_0, a_1, \ldots\rangle$. An $n$-tuple will be denoted with an overbar or, when brevity permits, a lower case Greek alphabet. When we have a finite sequence $\vec{b}=\langle a_1, a_2, \ldots, a_n\rangle$, then $\bar{b}$ will be understood to be $(a_1, a_2, \ldots, a_n)$; in our context, this happens predominantly when we discuss orderly composition and reduction (see Definitions \ref{ordcomp} and \ref{reduction}).

We assume the axiom of choice throughout.

\section{Ramsey Algebras}
As mentioned in the previous section, we will only be concerned with homogeneous algebras in this paper, hence we will circumvent the more general notions pertaining to heterogeneous algebras. The interested reader can find the more general treatment in Carlson's original paper \cite{carlson1988some} as well as in \cite{teoh2016vector} and \cite{teoh2017matrices}.

We assume that the reader is familiar with first-order logic. In particular, we will not give the precise definitions of elementary equivalence, elementary extension, and direct limit. Now, from a logical point of view, an algebra is a structure interpreting a purely functional
language. That is, an algebra $\mathcal{A}$ consists of a universe $A$ and a family $\mathcal{F}$ of finitary operations (i.e. functions)\footnote{We will use the terms function and operations interchangeably throughout this paper, as is customary with the practice of algebraists.} on $A$, i.e. each $f\in\mathcal{F}$ has as domain some finite Cartesian product $A^n$ and codomain $A$. We write $\mathcal{A}=(A, \mathcal{F})$ for the algebra just
described. An algebra is said to be \emph{infinite} if its universe is infinite; it is finite otherwise. An algebra whose family of operations consist only of unary functions will be called a \emph{unary system} or \emph{unary algebra}.

Our first formal definition deals with a certain type of composition of operations. The definition here is a slight modification of the original one given in Definition 3.6 by Carlson \cite{carlson1988some}.

\begin{definition}[Orderly Composition \& Orderly Terms]\label{ordcomp}
Let $\mathcal{F}$ be a family of operations on $A$. An $n$-ary function $f$ is called an \emph{orderly composition} of $\mathcal{F}$ if
there exists $h_1, h_2, \ldots, h_k, g\in\mathcal{F}$ such that
\begin{enumerate}
\item $g$ is a $k$-ary function,
\item $h_j$ is an $n_j$-ary function for each $j\in\{1, 2, \ldots, k\}$,
\item $\sum_{j=1}^k n_j=n$, and
\item if $\bar{x}_1=(x_1, x_2, \ldots, x_{n_1})$ and for each $j=\{2$,
    $\ldots$, $k\}$ we have $\bar{x}_j=\left(x_{\left(\sum_{i=1}^{j-1}n_i\right)+1}, x_{\left(\sum_{i=1}^{j-1}n_i\right)+2}, \ldots, x_{\sum_{i=1}^jn_i}\right)$, then $f(x_1, \ldots, x_n)=g(h_1(\bar{x}_1), h_2(\bar{x}_2), \ldots, h_k(\bar{x}_k))$.
\end{enumerate}

The collection $\ot(\mathcal{F})$ of \emph{orderly terms} over $\mathcal{F}$ is the \emph{smallest} collection of operations containing
$\mathcal{F}\cup\left\{\id_A\right\}$ and is closed under orderly composition.
\end{definition}

The collection of orderly terms over $\mathcal{F}$ is in fact the collection of operations on $A$ which can be generated by an
application of finitely many of the following more lucid rules:
\begin{enumerate}
\item $\text{id}_A$ is an orderly term,
\item every operation  in $\mathcal{F}$ is an orderly term,
\item if $f$ is an operation on $A$ given by $f(\bar{x}_1, \bar{x}_2, \ldots, \bar{x}_k)=g(h_1(\bar{x}_1)$, $h_2(\bar{x}_2)$, $\ldots$, $h_k(\bar{x}_k))$ for some
    $g\in\mathcal{F}$ and some orderly terms $h_1$, $h_2$, $\ldots$, $h_k$, then $f$ is an orderly term.
\end{enumerate}

In formal terms, suppose a function symbol is associated with each element of $\mathcal{F}$. The orderly terms over $\mathcal{F}$ are defined by formal terms in which each variable in the term occurs exactly once and the variables appear ``in order."

We denote the concatenation operation by $\ast$ in this paper.

\begin{definition}[Reduction $\leq_\mathcal{F}$]\label{reduction}
Let $(A, \mathcal{F})$ be an algebra and let $\vec{a}$ and $\vec{b}$ be members of ${^\omega}A$. Then $\vec{a}$ is said to be a \emph{reduction} of $\vec{b}$ if there exist orderly terms $f_j$ over $\mathcal{F}$ and finite subsequences $\vec{b}_j$
of $\vec{b}$ such that
\begin{enumerate}
\item $\vec{a}(j)=f_j(\bar{b}_j)$ for each $j\in\omega$ and
\item $\vec{b}_0\ast\vec{b}_1\ast\cdots$ forms a subsequence of $\vec{b}$.
\end{enumerate}
We write $\vec{a}\leq_\mathcal{F}\vec{b}$ to mean $\vec{a}$ is a reduction of $\vec{b}$.
\end{definition}

The relation $\leq_\mathcal{F}$ is a preorder\footnote{A preorder is a relation that is reflexive and transitive.} on ${^\omega}A$. Note that the inclusion of the identity functions in the set of orderly terms is necessary to ensure that the relation
$\leq_\mathcal{F}$ is reflexive.

We pause to illustrate the two definitions above using the algebra $(\mathbb{Z}^+, +)$. (According to our notation above, the exact notation for this algebra should be written as $(\mathbb{Z}^+, \{+\})$. However, from this point onwards, we will drop the curly brackets encompassing the operations when the list is short.) Examples of orderly terms over $\{+\}$ include $+(x_0, +(x_1, x_2))$ and $+(+(x_1, x_2), +(x_3, +(x_4, x_5)))$. Note that the variables appear in order from left to right and no repetition occurs. Also from this point onward, we will write orderly terms involving common operations such as $+$ is the more suggestive manner such as $x_0+(x_1+x_2)$ and $(x_1+x_2)+(x_3+(x_4+x_5))$ for the orderly terms above. Of course, since addition on $\mathbb{Z}^+$ is commutative, the bracketing does not matter, and we will even be omitting them and write $x_0+x_1+x_2$ and $x_1+x_2+x_3+x_4+x_5$. We will revert to conventional notations whenever possible.

As for reduction, let $\vec{b}=\langle 1, 3, 5, 7, \ldots\rangle$. Then an example of a reduction of $\vec{b}$ is $\langle 6, 16, 53, 23, \ldots\rangle$ with the associated finite sequences and orderly terms given respectively by $x_1+x_2$ on $\langle 1, 5\rangle$, $x_1+x_2$ again but on $\langle 7, 9\rangle$, $x_1+x_2+x_3$ on $\langle 15, 17, 21\rangle$, and $\id_{\mathbb{Z}^+}(x)$ on $23$, and so on. We have reverted to conventional notation in this example.

For families consisting of unary operations, the orderly terms coincide with the familiar composite functions:
\begin{remark}
If $\mathcal{F}$ is a family of unary operations on $A$, then $\ot(\mathcal{F})$ coincides with the set of all composites of functions in $\mathcal{F}\cup\{\id_A\}$.
\end{remark}

We continue with more definitions for Ramsey algebras. The next one is the generalization of the sets analogous to $\text{FS}$ in the introductory section and are precisely those sets $\text{FS}(\langle x_n\rangle_{n=1}^\infty)$ appearing in Chapter 5 of \cite{hindman1998algebra} in the context of the algebra $(\mathbb{Z}^+, +)$.

\begin{definition}\label{FR}
If $\vec{b}$ is an an infinite sequence of $A$, then
\begin{eqnarray*}
\FR_\mathcal{F}(\vec{b}) &=& \{\vec{a}(0):\vec{a}\leq_\mathcal{F}\vec{b}\} \\
&=& \{f(\sigma):f\in\ot(\mathcal{F}), \sigma\;\text{is a finite sequence of}\:\vec{b}\}.
\end{eqnarray*}
\end{definition}

We end this section with the central notion of the paper.

\begin{definition}[Ramsey Algebra]\label{RA}
An algebra $(A, \mathcal{F})$ is said to be a Ramsey algebra if, for each infinite sequence
$\vec{b}$ and each $X\subseteq A$, there exists $\vec{a}\leq_\mathcal{F}\vec{b}$ such that $\FR_\mathcal{F}(\vec{a})$ is
either contained in or disjoint from $X$.

Such a sequence $\vec{a}$ is said to be \emph{homogeneous} for $X$ (with respect to $\mathcal{F}$).
\end{definition}

In the language of Ramsey algebras, Hindman's theorem takes the following form (see Corollary 5.9 of \cite{hindman1998algebra}).

\begin{theorem}[Hindman]\label{semigroup}
Every semigroup is a Ramsey algebra.
\end{theorem}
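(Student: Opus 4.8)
The plan is to recognize this statement as a direct reformulation of the finite-products form of Hindman's theorem, so that the proof consists of translating the Ramsey-algebraic vocabulary into the language of Corollary 5.9 of \cite{hindman1998algebra} and then invoking it. Writing $(A, \cdot)$ for a semigroup with $\mathcal{F}=\{\cdot\}$, the first task is to pin down $\ot(\mathcal{F})$ and hence $\FR_{\mathcal{F}}$.

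First I would determine which operations the orderly terms over a single associative binary operation compute. Starting from $\id_A$ (unary) and $\cdot$ (binary) and closing under orderly composition, an induction on term complexity shows that every orderly term in $\ot(\{\cdot\})$ computes one of the product functions $p_n(x_1, \ldots, x_n)=x_1\cdot x_2\cdots x_n$ for some $n\geq 1$, with $p_1=\id_A$; conversely each $p_n$ is an orderly term. Associativity is exactly what is needed here: it guarantees that the various bracketings arising from different orderly compositions of $\cdot$ all collapse to the single well-defined product $p_n$. Consequently $\FR_{\mathcal{F}}(\vec b)=\{\,b_{i_1}b_{i_2}\cdots b_{i_k}: k\geq 1,\ i_1<i_2<\cdots<i_k\,\}$, which is precisely the finite-products set $\FP(\langle b_n\rangle)$ of \cite{hindman1998algebra}.

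Next I would unwind the reduction relation. By the same characterization, $\vec a\leq_{\mathcal{F}}\vec b$ means there are disjoint finite blocks of consecutive entries of $\vec b$, listed in increasing order, whose products are the successive entries of $\vec a$; this is exactly the notion of a \emph{product subsystem} of $\langle b_n\rangle$ in Hindman--Strauss, and it yields $\FR_{\mathcal{F}}(\vec a)=\FP(\langle a_n\rangle)\subseteq\FP(\langle b_n\rangle)$. With these identifications the Ramsey algebra condition reads: for every sequence $\vec b$ and every $X\subseteq A$ (equivalently, every $2$-colouring $c\colon \FP(\langle b_n\rangle)\to\{0,1\}$ obtained from the partition $X$, $A\setminus X$), there is a product subsystem $\vec a$ with $\FP(\langle a_n\rangle)$ monochromatic. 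This is the statement of Corollary 5.9 of \cite{hindman1998algebra} in the case $r=2$, so applying it with the colouring induced by $X$ produces the required homogeneous $\vec a$.

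The hard part is not really in the write-up, since the substantive content---the infinitary pigeonhole guaranteeing a monochromatic product subsystem---is Hindman's theorem itself, which we are free to cite. The only genuine internal obstacle is the inductive verification that orderly terms over an associative binary operation compute exactly the products $p_n$; getting this right, and making explicit where associativity is used to identify $\FR_{\mathcal{F}}(\vec b)$ with $\FP(\langle b_n\rangle)$, is what licenses the translation, after which the theorem follows immediately.
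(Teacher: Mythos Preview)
Your proposal is correct and matches the paper's approach: the paper gives no proof at all for this theorem, simply stating that it is a reformulation of Corollary~5.9 of \cite{hindman1998algebra}, and your write-up just makes explicit the dictionary (orderly terms $=$ iterated products via associativity, $\FR_{\mathcal F}(\vec b)=\FP(\langle b_n\rangle)$, reductions $=$ product subsystems) that justifies this citation.
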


As mentioned earlier, the origin of Ramsey algebras has its roots in the notion a Ramsey space introduced by Carlson. Readers who are interested in the exact connection between (topological) Ramsey spaces and Ramsey algebras are referred to \cite{teoh2016vector}.

Carlson's theorem on the variable words, from which he derived many of the classical combinatorial theorems mentioned in the introduction, can be stated in terms of Ramsey algebras:

\begin{theorem}[Carlson]
The algebra of variable words with finite alphabets equipped with the operations of ``substitution'' and ``evaluation'' is a Ramsey algebra.
\end{theorem}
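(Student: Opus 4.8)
The plan is not to reprove this from scratch but to recognize it as a transcription of Carlson's original variable-word theorem into the present framework. First I would pin down the algebra: fix a finite alphabet $L$ and a variable symbol $v\notin L$, and take the universe to consist of the variable words over $L$ (finite strings over $L\cup\{v\}$ in which $v$ occurs), with substitution plugging one variable word into the variable of another and evaluation replacing the variable by a letter of $L$. The exact signatures depend on the formalization --- in particular whether constant words are admitted into the universe so that a bare evaluation has a target --- but this will not affect the argument. With the operations fixed, the central preliminary task is to unwind Definitions \ref{reduction} and \ref{FR} for this algebra and to show that, for an infinite sequence $\vec{b}$ of variable words, $\FR_\mathcal{F}(\vec{b})$ is exactly the set of variable words obtainable by choosing a finite increasing block of $\vec{b}$ and, coordinate by coordinate, either retaining the variable or substituting a letter, with at least one variable surviving. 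This is precisely the collection of variable words lying in the combinatorial subspace that Carlson associates to $\vec{b}$.

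The second step is to appeal to the correspondence between topological Ramsey spaces and Ramsey algebras (cf.\ \cite{carlson1988some} and \cite{teoh2016vector}): Carlson proves that the space of infinite sequences of variable words, ordered by reduction, is a topological Ramsey space, and when such a space is generated by an algebra the generating algebra is Ramsey. Granting the dictionary of the previous paragraph, the relevant two-coloring case of Carlson's theorem --- that the coloring is constant on the combinatorial subspace of some reduct --- reads verbatim as the statement that for every $X\subseteq A$ and every $\vec{b}$ there is $\vec{a}\leq_\mathcal{F}\vec{b}$ with $\FR_\mathcal{F}(\vec{a})$ contained in or disjoint from $X$, which is exactly Definition \ref{RA}. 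As an alternative, a self-contained proof could be run through idempotent ultrafilters on the partial semigroup of variable words together with its evaluation homomorphisms, in the spirit of the ultrafilter proof behind Theorem \ref{semigroup}; but this route is substantially heavier and merely duplicates Carlson's work.

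The only genuine work, and the step I expect to be the main obstacle, is the bookkeeping hidden in the first paragraph: one must verify that the orderly terms over the substitution and evaluation operations generate \emph{exactly} the finite substitute-or-retain combinations described, neither more nor fewer. Two features need careful matching. First, the ``each variable occurs once, in order'' constraint built into orderly terms (Definition \ref{ordcomp}) must be reconciled with the requirement in Carlson's reductions that the chosen block be increasing and that its coordinates be used disjointly. Second, the requirement that outputs remain genuine variable words --- so that the variable is never entirely evaluated away --- must be shown to be faithfully reflected by the closure of $\ot(\mathcal{F})$ under orderly composition, including the role of $\id_A$. Once $\FR_\mathcal{F}$ has been identified with Carlson's combinatorial subspaces in this way, no further combinatorics is needed and the Ramsey algebra property is immediate.
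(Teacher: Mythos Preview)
The paper gives no proof of this statement at all: it is simply stated as Carlson's theorem, attributed to \cite{carlson1988some}, immediately after Hindman's theorem and with no accompanying argument. So there is nothing to compare your proposal against; your plan to identify $\FR_\mathcal{F}(\vec{b})$ with Carlson's combinatorial subspaces and then invoke his variable-word theorem (or the Ramsey-space/Ramsey-algebra correspondence) is already more than the paper supplies, and is the natural way to justify the claim if a proof were required.
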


Before we embark on an investigation of new questions, we state a few more results from past works.

\begin{theorem}
No infinite division ring is a Ramsey algebra. In
particular, no infinite integral domain is a Ramsey algebra.
\end{theorem}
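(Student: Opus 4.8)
The plan is to prove the contrapositive in its strong form: for a suitably chosen infinite sequence $\vec b$ I will exhibit a single subset $X\subseteq R$ that admits no homogeneous reduction, thereby witnessing directly that $(R,\{+,\times\})$ is not Ramsey. First I would collapse the two assertions into one. As algebras, an infinite division ring and an infinite integral domain are each of the form $(R,\{+,\times\})$ with $R$ infinite and \emph{without zero divisors}; equivalently, multiplication by any nonzero element is injective. Since the operation set is identical in both cases, it suffices to prove that no infinite ring without zero divisors is a Ramsey algebra, and this is exactly what makes the integral-domain case a special instance rather than a separate theorem. The no-zero-divisor hypothesis is the engine of the argument: it guarantees that products of nonzero members of $\FR_{\{+,\times\}}(\vec a)$ are again nonzero, so that the multiplicative combinations generated along a reduction cannot cancel away.

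Next I would choose $\vec b$ with care so as to rule out \emph{degenerate} reductions. The danger is a reduction $\vec a\leq_{\{+,\times\}}\vec b$ whose entries fall into a finite subalgebra; for example $\langle 1,1,\dots\rangle$, whose $\FR$ in characteristic $p$ is the finite prime field $\mathbb{F}_p$. Such an $\FR_{\{+,\times\}}(\vec a)$ might be forced to be monochromatic no matter how $X$ is defined. Using that $R$ is infinite, I would select $\vec b$ with distinct nonzero entries that are spread out enough, measured by growth in characteristic $0$ and by the degree of $b_n$ over the prime field in positive characteristic, to force $\FR_{\{+,\times\}}(\vec a)$ to be infinite, indeed of full cardinality $|R|$, for \emph{every} reduction $\vec a\leq_{\{+,\times\}}\vec b$. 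Cancellation is again what keeps products along $\vec b$ producing new elements rather than collapsing.

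With such a $\vec b$ fixed, the heart of the argument is to manufacture the colouring $X$. For every reduction $\vec a$ the set $\FR_{\{+,\times\}}(\vec a)$ automatically contains a rich diagnostic family built from the first few entries: $a_0$, the additive combinations $a_0+a_1+\dots+a_k$, the multiplicative combinations $a_0a_1\cdots a_k$, and mixed orderly terms such as $(a_0+a_1)a_2$; and by cancellation these stay genuinely distinct. I would design $X$ so that it \emph{splits} every such family, that is, so that $X$ meets both $\FR_{\{+,\times\}}(\vec a)$ and its complement, by separating elements according to an invariant on which addition and multiplication behave incompatibly (a scale or valuation in the ordered or valued cases, and a more delicate field-theoretic invariant otherwise). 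Granting this, no reduction of $\vec b$ is homogeneous for $X$, and the theorem follows.

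The main obstacle is precisely this construction: producing one colouring that simultaneously splits $\FR_{\{+,\times\}}(\vec a)$ for \emph{all} reductions at once. A naive transfinite diagonalisation founders on cardinality, since the number of reductions may exceed $|R|$, so the colour of each element cannot be decided one reduction at a time; what is needed is a genuine structural invariant. The recalcitrant case is a locally finite field such as $\overline{\mathbb{F}_p}$, which carries no nontrivial ordering or valuation and whose arithmetic can lower the degrees of products unpredictably, so that the invariants available in characteristic $0$ or for valued fields break down. There one is forced into finite, characteristic-$p$ combinatorics on the lattice of finite subfields to guarantee that every $\FR_{\{+,\times\}}(\vec a)$ realises two colours, and I expect this uniform treatment of the locally finite case to be where the real work lies.
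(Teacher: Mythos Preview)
The paper does not actually prove this theorem; it is quoted as a prior result from \cite{teh2016ramsey} and no argument is given in the present manuscript. So there is no in-paper proof to compare your proposal against.

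That said, your proposal is not a proof but a programme, and you acknowledge as much. The entire content of the argument lives in the sentence ``I would design $X$ so that it splits every such family \ldots\ by separating elements according to an invariant on which addition and multiplication behave incompatibly,'' and you never produce that invariant. You then concede that naive diagonalisation fails on cardinality grounds, that the locally finite case (e.g.\ $\overline{\mathbb{F}_p}$) resists the valuation/ordering heuristics you have in mind, and that you ``expect this \ldots\ to be where the real work lies.'' In other words, the step that carries the theorem is exactly the step you have left open. Until the colouring $X$ is written down and shown to work uniformly, what you have is a plausibility sketch, not a proof.

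One further caution: your case analysis by characteristic and by presence of an ordering/valuation is more elaborate than the theorem likely requires. The published proof in \cite{teh2016ramsey} is short and does not split into a structural classification of the ambient ring; the no-zero-divisor hypothesis is used directly to force a conflict between an additive and a multiplicative orderly term on a single well-chosen sequence and colouring, without recourse to valuations or to the lattice of finite subfields. If you find yourself needing a separate argument for $\overline{\mathbb{F}_p}$, that is a sign the invariant you are reaching for is the wrong one.
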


\begin{corollary}\label{onrings}
No infinite ring with multiplicative identity having characteristic zero is a Ramsey algebra.
\end{corollary}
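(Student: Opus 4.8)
The plan is to reduce the corollary to the integral-domain case already recorded in the preceding theorem. Let $(R, \{+, \times\})$ be an infinite ring with multiplicative identity $1$ and characteristic zero. Since the characteristic is zero, the additive order of $1$ is infinite, so the map $n \mapsto n\cdot 1$ embeds $\mathbb{Z}$ into $R$; write $B = \{n\cdot 1 : n \in \mathbb{Z}\}$ for its image. Because $(n\cdot 1) + (m\cdot 1) = (n+m)\cdot 1$ and $(n\cdot 1)(m\cdot 1) = (nm)\cdot 1$, the set $B$ is closed under both $+$ and $\times$, so $(B, \{+, \times\})$ is a subalgebra of $(R, \{+, \times\})$ isomorphic to $(\mathbb{Z}, +, \times)$. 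In particular $B$ is an infinite integral domain, hence not a Ramsey algebra by the theorem. It therefore remains to argue that the failure of the Ramsey property in the subalgebra $B$ forces the same failure in $R$.

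For this transfer step, I would first observe that a subalgebra is automatically closed under every orderly term: the members of $\ot(\{+, \times\})$ are built from $+$, $\times$, and $\id_R$ by orderly composition, and $B$ is closed under each of these while containing the restriction $\id_R\restriction B = \id_B$. Consequently, if $\vec{b} \in {}^{\omega}B$, then every reduction $\vec{a} \leq_\mathcal{F}\vec{b}$ again lies in ${}^{\omega}B$, since each $\vec{a}(j) = f_j(\bar{b}_j)$ is the value of an orderly term on a finite sequence drawn from $B$. Moreover the reduction relation $\leq_\mathcal{F}$ and the operator $\FR_\mathcal{F}$ computed inside $R$ agree with those computed inside $B$ for sequences taking values in $B$, because the orderly terms act identically on $B$-tuples in either ambient algebra; in particular $\FR_\mathcal{F}(\vec{a}) = \FR_\mathcal{F}(\vec{a})\subseteq B$ whether read in $B$ or in $R$.

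Now take a pair $(\vec{b}, X)$ with $\vec{b} \in {}^{\omega}B$ and $X \subseteq B$ witnessing that $B$ is not a Ramsey algebra, that is, such that for no reduction $\vec{a} \leq_\mathcal{F}\vec{b}$ is $\FR_\mathcal{F}(\vec{a})$ contained in or disjoint from $X$. Viewing $\vec{b}$ as a sequence of $R$ and $X$ as a subset of $R$, any reduction $\vec{a} \leq_\mathcal{F}\vec{b}$ taken in $R$ is, by the previous paragraph, a reduction in $B$ with $\FR_\mathcal{F}(\vec{a}) \subseteq B$; since $X \subseteq B$ as well, the conditions ``$\FR_\mathcal{F}(\vec{a}) \subseteq X$'' and ``$\FR_\mathcal{F}(\vec{a}) \cap X = \varnothing$'' are unaffected by the passage from $B$ to $R$, and both still fail. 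Hence $(\vec{b}, X)$ witnesses that $R$ is not a Ramsey algebra, which completes the argument.

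The main obstacle is precisely this transfer step, and the point demanding care is the claim that the preorder $\leq_\mathcal{F}$ and the set $\FR_\mathcal{F}$ are computed the same way in $B$ and in $R$; once one checks that orderly terms restrict correctly to the subalgebra and that the chosen colour $X$ lives inside $B$, the witness for $B$ passes verbatim to $R$. In fact the reasoning isolates a general principle, namely that a non-Ramsey subalgebra forces the ambient algebra to be non-Ramsey, which could be recorded separately and then applied to $B \cong (\mathbb{Z}, +, \times)$.
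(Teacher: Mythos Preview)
Your argument is correct. The paper itself does not supply a proof of this corollary; it simply records the statement and attributes it, together with the preceding theorem on division rings and integral domains, to \cite{teh2016ramsey}. Your derivation is precisely the natural one implicit in calling it a corollary: embed $\mathbb{Z}$ into $R$ via $n\mapsto n\cdot 1$ using characteristic zero, observe that the image is a subalgebra and an infinite integral domain (hence non-Ramsey by the preceding theorem), and then transfer the failure of the Ramsey property upward from the subalgebra to $R$.

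The only remark worth making is that your transfer step is exactly the contrapositive of what the paper later states as Theorem~\ref{Ramseysubalgebra} (every subalgebra of a Ramsey algebra is Ramsey). You have in effect reproved that lemma inline, which is fine and arguably cleaner here since the corollary appears in the paper before Theorem~\ref{Ramseysubalgebra}; alternatively one could simply cite that result and shorten the write-up to two lines. Your closing observation that the argument isolates a general principle is therefore on the mark: the paper does record it separately, just in a later section.
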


The theorem and its corollary above can be found in \cite{teh2016ramsey}. The next theorem, which is found in the same paper, will play an important role in Section 4.
\begin{theorem}[Characterization of Unary Ramsey Algebras]\label{unaryS}
Let $\mathcal{A}=(A, \mathcal{F})$ be a unary system. Then $\mathcal{A}$ is a Ramsey algebra if and only if for each $a\in A$, there exists an $F\in\ot(\mathcal{F})$ such that $F(a)\in\{a\in A:f(a)=a\;\text{for all}\;f\in\mathcal{F}\}$.
\end{theorem}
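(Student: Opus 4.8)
The plan is to prove both implications by exploiting the fact that, for a unary system, the orderly terms are exactly the composites of members of $\mathcal{F} \cup \{\id_A\}$ (the preceding Remark), so $\ot(\mathcal{F})$ is just the transition monoid acting on $A$. Write $\mathcal{O}(a) = \{F(a) : F \in \ot(\mathcal{F})\}$ for the forward orbit of $a$ and $E = \{x \in A : f(x) = x \text{ for all } f \in \mathcal{F}\}$ for the set of common fixed points in the statement. Two observations will be used repeatedly. First, because every orderly term is unary, a finite subsequence to which one applies an orderly term has length one, so for any infinite sequence $\vec{b}$ we have $\FR_\mathcal{F}(\vec{b}) = \bigcup_n \mathcal{O}(b_n)$. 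Second, if $c \in E$ then $F(c) = c$ for every $F \in \ot(\mathcal{F})$, whence $\mathcal{O}(c) = \{c\}$; thus the condition in the theorem says precisely that every orbit $\mathcal{O}(a)$ meets $E$.

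For the backward direction, assume every $\mathcal{O}(a)$ meets $E$ and fix $\vec{b}$ and $X \subseteq A$. For each $n$ choose, by hypothesis applied to $b_n$, an orderly term $F_n$ with $F_n(b_n) \in E$, and set $\vec{a}(n) = F_n(b_n)$; taking the length-one subsequences $\langle b_n \rangle$ shows $\vec{a} \leq_\mathcal{F} \vec{b}$. Since each $\vec{a}(n)$ is a fixed point, $\FR_\mathcal{F}(\vec{a})$ collapses to the range $\{\vec{a}(n) : n \in \omega\}$. An application of the infinite pigeonhole principle then yields an infinite $I \subseteq \omega$ on which the values $\vec{a}(n)$ all lie in $X$ or all lie outside $X$; the corresponding subsequence $\vec{a}'$ satisfies $\vec{a}' \leq_\mathcal{F} \vec{a} \leq_\mathcal{F} \vec{b}$ and $\FR_\mathcal{F}(\vec{a}') = \{\vec{a}(n) : n \in I\}$ is contained in or disjoint from $X$, as required.

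For the forward direction I would argue by contraposition: assuming some $a$ has $\mathcal{O}(a) \cap E = \emptyset$, I would construct a set $X$ for which the constant sequence $\vec{b} = \langle a, a, \ldots \rangle$ admits no homogeneous reduction. Every reduction of $\vec{b}$ has the form $\vec{a}(j) = F_j(a)$ with $F_j(a) \in \mathcal{O}(a)$, so $\FR_\mathcal{F}(\vec{a}) = \bigcup_j \mathcal{O}(F_j(a))$ is a union of forward orbits of points of $\mathcal{O}(a)$. Hence it suffices to find $X$ such that every orbit $\mathcal{O}(c)$ with $c \in \mathcal{O}(a)$ meets both $X$ and $A \setminus X$: any such union then meets both sides and no reduction can be homogeneous. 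To build $X$, note that $\mathcal{O}(a)$ is closed under $\mathcal{F}$ and contains no fixed point, so for each $c \in \mathcal{O}(a)$ we may (using choice) pick $f_c \in \mathcal{F}$ with $\phi(c) := f_c(c) \neq c$. This gives a single fixed-point-free self-map $\phi$ of $\mathcal{O}(a)$ whose forward orbits are contained in the corresponding $\mathcal{O}(c)$, and I would $2$-color $\mathcal{O}(a)$ so that no forward $\phi$-orbit is monochromatic, which forces each $\mathcal{O}(c)$ to meet both colors.

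The heart of the argument, and the step I expect to be the main obstacle, is this $2$-coloring of the functional graph of $\phi$. Each weakly connected component of that graph either contains a (necessarily length $\geq 2$, since $\phi$ has no fixed point) directed cycle, in which case one checks that every forward orbit reaches and then traverses the whole cycle, so coloring the cycle with both colors suffices; or it is acyclic, in which case its underlying undirected graph is a tree, hence bipartite, and a proper $2$-coloring makes $\chi(c) \neq \chi(\phi(c))$ for every $c$. Assembling these colorings componentwise produces the desired $X$. The delicate points are verifying that in a component possessing a cycle every forward orbit actually reaches it (so that the tree-feeding vertices cause no trouble) and that acyclic components are genuinely trees; both follow from the out-degree-one structure of functional graphs, but they are exactly where the infinite case, such as a successor map whose orbits have no terminal strongly connected component, must be handled with care rather than by a naive ``pass to a terminal cycle'' argument.
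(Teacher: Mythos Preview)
The paper does not actually prove this theorem: it is quoted from \cite{teh2016ramsey} and used as a tool, so there is no in-paper argument to compare against. Your proposal therefore has to be judged on its own, and it is correct.

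The backward direction is clean: reducing each term of $\vec{b}$ into the fixed-point set $E$ collapses $\FR_\mathcal{F}$ to a set of fixed points, and one pigeonhole finishes it.

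For the forward direction, the functional-graph step you flagged as the main obstacle does go through. In a functional graph (out-degree $1$ everywhere), the set $T$ of vertices whose forward orbit meets a given directed cycle $C$ is closed under both $\phi$ and $\phi^{-1}$, hence is a union of weak components; since $C\subseteq T$, the whole component containing $C$ lies in $T$. Thus in a component with a cycle every forward orbit reaches that cycle, and colouring the cycle non-monochromatically (its length is $\geq 2$ since $\phi$ is fixed-point-free) already makes every forward orbit bichromatic, regardless of how the feeding trees are coloured. For an acyclic component, the point is that in a functional graph every undirected cycle is already a directed cycle (a direction reversal from ``backward'' to ``forward'' along an undirected walk would force out-degree $\geq 2$), so the underlying graph is a tree, hence bipartite, and a proper $2$-colouring gives $\chi(c)\neq\chi(\phi(c))$. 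Assembling these colourings over the components of $\mathcal{O}(a)$ yields your $X$, and then every $\mathcal{O}(c)$ with $c\in\mathcal{O}(a)$ contains the forward $\phi$-orbit of $c$ and hence meets both colours. That blocks homogeneity of every reduction of the constant sequence $\langle a,a,\ldots\rangle$, completing the contrapositive.
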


This theorem will be used in conjunction with the predecessor function in Section \ref{predfncsec} below. Points in the set $\{a\in A:f(a)=a\;\text{for all}\;f\in\mathcal{F}\}$ will be called \emph{fixed points} of $\mathcal{F}$ and $\ot(\mathcal{F})$ coincides with the smallest set of composite functions containing $\mathcal{F}$. Thus, when applying the theorem above, we will often speak of a point being able to be sent to the set of fixed points by \emph{finitely many applications} of the functions in $\mathcal{F}$.

\section{Homomorphic Algebras \& Quotients}
The main result of this section is that isomorphic algebras have the same Ramsey algebraic property. This is not surprising, but we state it formally as a consequence of epimorphic mappings.

Recall that the notion of isomorphic algebras only apply to algebras with the same language. For the convenience of writing proofs, we give the definition of the notion of a homomorphism. Hence, if $\mathcal{L}$ is a family of function symbols and $\mathcal{A}_0=(A_0, \mathcal{F}_0), \mathcal{A}_1=(A_1, \mathcal{F}_1)$ are algebras of the language $\mathcal{L}$, then $\pi:A_1\to A_0$ is said to be a \emph{homomorphism} from $A_1$ into $A_0$ if
\begin{equation}\label{homdef}
\pi(f^{\mathcal{A}_1}(b_1, \ldots, b_n))=f^{\mathcal{A}_0}(\pi(b_1), \ldots, \pi(b_n))
\end{equation}
for each $f\in\mathcal{L}$ and each $b_1, \ldots, b_n\in A_1$. It follows that, if $t$ is an $\mathcal{L}$-term and $b_1, \ldots, b_n\in A_1$, then
\begin{equation}\label{homeq}
\pi(t^{\mathcal{A}_1}(b_1, \ldots, b_n))=t^{\mathcal{A}_0}(\pi(b_1), \ldots, \pi(b_n)).
\end{equation}
In particular, Eq.\:\ref{homeq} holds for $\mathcal{L}$-terms interpreting orderly terms.

\begin{theorem}\label{epi}
If $\pi:A_1\to A_0$ is an epimorphism and $(A_1, \mathcal{F}_1)$ is a Ramsey algebra, then $(A_0, \mathcal{F}_0)$ is a Ramsey algebra.
\end{theorem}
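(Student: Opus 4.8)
The plan is to pull the Ramsey problem for $(A_0, \mathcal{F}_0)$ back to $(A_1, \mathcal{F}_1)$ along $\pi$, solve it upstairs using the hypothesis, and push the solution forward. First I would fix an arbitrary infinite sequence $\vec{b}$ of $A_0$ together with an arbitrary $X\subseteq A_0$. Since $\pi$ is surjective, the axiom of choice lets me lift $\vec{b}$ to an infinite sequence $\vec{c}$ of $A_1$ with $\pi(\vec{c}(n))=\vec{b}(n)$ for every $n$, and I would set $Y=\pi^{-1}(X)\subseteq A_1$. Applying the assumption that $(A_1,\mathcal{F}_1)$ is a Ramsey algebra to $\vec{c}$ and $Y$ yields some $\vec{d}\leq_{\mathcal{F}_1}\vec{c}$ with $\FR_{\mathcal{F}_1}(\vec{d})$ contained in or disjoint from $Y$. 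The candidate witness downstairs is then $\vec{a}:=\pi\circ\vec{d}$.

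Two things must be verified: that $\vec{a}\leq_{\mathcal{F}_0}\vec{b}$ and that $\FR_{\mathcal{F}_0}(\vec{a})$ is contained in or disjoint from $X$. Both rest on the observation that an orderly term is a syntactic object over the common language $\mathcal{L}$, so the very same term has interpretations in both algebras, related by Eq.\:\ref{homeq}. For the reduction claim, I would take the orderly terms $f_j$ and the finite subsequences $\vec{c}_j$ of $\vec{c}$ witnessing $\vec{d}\leq_{\mathcal{F}_1}\vec{c}$, and observe that $\pi$ sends each $\vec{c}_j$ to a finite subsequence of $\vec{b}$ whose concatenation remains a subsequence of $\vec{b}$. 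Since $\vec{a}(j)=\pi(f_j^{\mathcal{A}_1}(\bar{c}_j))=f_j^{\mathcal{A}_0}(\pi(\bar{c}_j))$ by Eq.\:\ref{homeq}, the same terms interpreted over $\mathcal{F}_0$ witness $\vec{a}\leq_{\mathcal{F}_0}\vec{b}$.

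The crux is the identity $\FR_{\mathcal{F}_0}(\vec{a})=\pi\bigl(\FR_{\mathcal{F}_1}(\vec{d})\bigr)$, which I would prove by double inclusion using the second description of $\FR$ in Definition \ref{FR}. An element of the left side is $g^{\mathcal{A}_0}(\sigma)$ for an orderly term $g$ and a finite subsequence $\sigma$ of $\vec{a}$; writing $\sigma=\pi(\tau)$ for the corresponding finite subsequence $\tau$ of $\vec{d}$ and applying Eq.\:\ref{homeq} gives $g^{\mathcal{A}_0}(\sigma)=\pi(g^{\mathcal{A}_1}(\tau))\in\pi(\FR_{\mathcal{F}_1}(\vec{d}))$, and the reverse inclusion is the same computation read backwards. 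Granting this identity, the dichotomy transfers cleanly: if $\FR_{\mathcal{F}_1}(\vec{d})\subseteq Y=\pi^{-1}(X)$ then $\FR_{\mathcal{F}_0}(\vec{a})=\pi(\FR_{\mathcal{F}_1}(\vec{d}))\subseteq\pi(\pi^{-1}(X))\subseteq X$, while if $\FR_{\mathcal{F}_1}(\vec{d})$ is disjoint from $\pi^{-1}(X)$ then no point of $\FR_{\mathcal{F}_1}(\vec{d})$ maps into $X$, so $\FR_{\mathcal{F}_0}(\vec{a})$ is disjoint from $X$.

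I expect the only real subtlety — rather than a genuine obstacle — to be the bookkeeping behind the claim that orderly terms over $\mathcal{F}_1$ and over $\mathcal{F}_0$ are indexed by the same syntactic terms, including the treatment of $\id_{A_1}$ versus $\id_{A_0}$ (handled by $\pi(\id_{A_1}(b))=\id_{A_0}(\pi(b))$), so that Eq.\:\ref{homeq} applies uniformly to every term appearing in a reduction or in $\FR$. Everything else is a direct pull-back/push-forward argument along the epimorphism.
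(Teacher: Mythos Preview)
Your proposal is correct and follows essentially the same approach as the paper: lift $\vec{b}$ along the surjection, pull back $X$ to $Y=\pi^{-1}(X)$, apply the Ramsey hypothesis upstairs, and push the homogeneous reduction forward via Eq.~\ref{homeq}. The only differences are cosmetic---variable names, and that you prove the full equality $\FR_{\mathcal{F}_0}(\vec{a})=\pi\bigl(\FR_{\mathcal{F}_1}(\vec{d})\bigr)$ where the paper only records (and only needs) the inclusion $\FR_{\mathcal{F}_0}(\vec{a})\subseteq\pi\bigl(\FR_{\mathcal{F}_1}(\vec{d})\bigr)$.
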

\begin{proof}
Suppose $\pi:A_1\to A_0$ is an epimorphism and $(A_1, \mathcal{F}_1)$ a Ramsey algebra; let $\vec{b}$ be an infinite sequence $A_0$ and $X\subseteq A_0$. Set $\vec{\beta}$ to be such that, for each $i\in\omega$, $\vec{\beta}(i)$ is a representative of the preimage of $\vec{b}(i)$ under $\pi$, the exact representative which is immaterial, and let $Y=\{\alpha\in A_1:\pi(\alpha)\in X\}$. We may now choose an $\vec{\alpha}\leq_{\mathcal{F}_1}\vec{\beta}$ homogeneous for $Y$.

We claim that $\vec{a}=\langle\pi(\vec{\alpha}(i)):i\in\omega\rangle$ is the desired reduction of $\vec{b}$. To see this, first note that $\vec{a}$ is indeed a reduction of $\vec{b}$ by appealing to Eq.\:\ref{homeq}. Secondly, let $c\in\fr_{\mathcal{F}_0}(\vec{a})$; thus, let $t$ be a term of the language of the algebras interpreting an orderly term over $\mathcal{F}_0$ and let $n_1<\cdots<n_N$ be indices such that $c=t^{\mathcal{A}_0}(\pi(\vec{\alpha}(n_1)), \ldots, \pi(\vec{\alpha}(n_N)))$. By Eq.\:\ref{homeq}, we obtain $c=\pi(t^{\mathcal{A}_1}(\vec{\alpha}(n_1), \ldots, \vec{\alpha}(n_N)))$. Thus, we see that $c$ is the image of an element of $\fr_{\mathcal{F}_1}(\vec{\alpha})$ under $\pi$.

Therefore, $\fr_{\mathcal{F}_0}(\vec{a})\subseteq X$ or $\fr_{\mathcal{F}_0}(\vec{a})\subseteq A_0\setminus X$ depending respectively on whether $\fr_{\mathcal{F}_1}(\vec{\alpha})\subseteq Y$ or $\fr_{\mathcal{F}_1}(\vec{\alpha})\subseteq A_1\setminus Y$. Thus, the homogeneity of $\vec{a}$ for $X$ is established.
\end{proof}

\begin{corollary}\label{isothm}
The property of being a Ramsey algebra is an invariant under isomorphism.
\end{corollary}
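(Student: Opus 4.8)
The plan is to derive this as an immediate two-directional application of Theorem \ref{epi}. Recall that an isomorphism between algebras of the same language is a bijective homomorphism; since it is in particular surjective, it is an epimorphism. Thus if $\pi : A_1 \to A_0$ is an isomorphism and $(A_1, \mathcal{F}_1)$ is a Ramsey algebra, Theorem \ref{epi} immediately yields that $(A_0, \mathcal{F}_0)$ is a Ramsey algebra.

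For the reverse implication, I would observe that the inverse $\pi^{-1} : A_0 \to A_1$ of an isomorphism is again an isomorphism, and hence an epimorphism. Applying Theorem \ref{epi} with the roles of the two algebras interchanged, I conclude that if $(A_0, \mathcal{F}_0)$ is a Ramsey algebra then so is $(A_1, \mathcal{F}_1)$. Combining the two directions, $(A_1, \mathcal{F}_1)$ is a Ramsey algebra if and only if $(A_0, \mathcal{F}_0)$ is, which is precisely the assertion that the property is invariant under isomorphism.

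There is essentially no obstacle here, since the entire combinatorial content has already been absorbed into Theorem \ref{epi}. The only point requiring any care---and it is a routine one---is the verification that $\pi^{-1}$ is itself a homomorphism, so that the theorem can legitimately be invoked in the second direction. This follows by applying $\pi^{-1}$ to both sides of the defining identity (\ref{homdef}) for $\pi$ and using that $\pi$ is a bijection. Accordingly, I expect the write-up to be a short remark rather than a genuine argument: state that an isomorphism and its inverse are both epimorphisms, cite Theorem \ref{epi} twice, and record the resulting equivalence.
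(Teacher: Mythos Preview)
Your proposal is correct and matches the paper's intent: the corollary is stated without proof immediately after Theorem \ref{epi}, so the intended argument is precisely the trivial observation that an isomorphism and its inverse are both epimorphisms, allowing Theorem \ref{epi} to be applied in each direction. There is nothing to add.
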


The fact that isomorphic algebras have the same Ramsey algebraic property do not come as a surprise. In the next section, we will see that, on the other hand, elementary equivalence is not sufficient to warrant such invariance.

We end this section with quotient algebras. We learn from group theory and ring theory that being an equivalence relation on the elements of a group or ring itself does not warrant a well-defined quotient. In the case of groups, the condition on the relation is for the associated subgroup to be normal. In general, we need the equivalence relation to be a \emph{congruence}. A congruence relation $E$ on an algebra is one where every operation in the algebra is compatible with:

\begin{definition}[Compatible Relation \& Congruence]
Let $\mathcal{A}=(A, \mathcal{F})$ be an algebra, let $E$ be an equivalence relation on $|\mathcal{A}|$, and let $f\in\mathcal{F}$. We say that $E$ is compatible with $f$ if $f(a_1, a_2, \ldots, a_{|f|})Ef(b_1, b_2, \ldots, b_{|f|})$ whenever $a_iEb_i$ for each $i\in\{1, 2, \ldots, |f|\}$. We say that $E$ is a congruence relation on $\mathcal{A}$ if $E$ is compatible with every $f\in\mathcal{F}$.
\end{definition}

A congruence relation $E$ partitions the universe of an algebra $\mathcal{A}$ into classes in a natural way that results in the operations on $\mathcal{A}$ being preserved among the pieces in an inherent manner. To be precise, a congruence relation $E$ ensures that the quotient map $\rho:a\mapsto[a]$ from $\mathcal{A}$ onto its quotient by $E$ is a homomorphism (see the paragraph immediately following Definition 5.2 on page 36 of \cite{univalg}). Since such a homomorphism is epimorphic, we have the following:

\begin{theorem}
The quotient of a Ramsey algebra by a congruence relation is Ramsey.
\end{theorem}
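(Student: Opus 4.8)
The plan is to reduce the statement entirely to Theorem \ref{epi}, since the heavy lifting has already been carried out there. Given a Ramsey algebra $\mathcal{A} = (A, \mathcal{F})$ and a congruence relation $E$ on $\mathcal{A}$, I would first recall the construction of the quotient algebra $\mathcal{A}/E = (A/E, \mathcal{F}/E)$, whose operations are defined representative-wise by $\bar{f}([a_1], \ldots, [a_{|f|}]) = [f(a_1, \ldots, a_{|f|})]$. The congruence condition is exactly what guarantees that this definition does not depend on the choice of representatives, so each $\bar{f}$ is a genuine, well-defined operation on $A/E$ in the same language as $\mathcal{A}$.

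Next I would verify that the quotient map $\rho : A \to A/E$ given by $\rho(a) = [a]$ is an epimorphism. That it is a homomorphism is immediate from the definition of $\mathcal{F}/E$: for each $f \in \mathcal{F}$ we have $\rho(f(a_1, \ldots, a_{|f|})) = [f(a_1, \ldots, a_{|f|})] = \bar{f}([a_1], \ldots, [a_{|f|}]) = \bar{f}(\rho(a_1), \ldots, \rho(a_{|f|}))$, which is precisely Eq.~\ref{homdef}. Surjectivity is equally immediate, since every class $[a] \in A/E$ is the image $\rho(a)$ of any one of its representatives. Hence $\rho$ is an epimorphism from the Ramsey algebra $\mathcal{A}$ onto $\mathcal{A}/E$.

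With both observations in hand, the conclusion follows by a direct application of Theorem \ref{epi}: the epimorphic image of a Ramsey algebra is again a Ramsey algebra, so $\mathcal{A}/E$ is Ramsey. There is essentially no obstacle here; the only point requiring any care is the well-definedness of the quotient operations, which is guaranteed by the congruence hypothesis and has already been recorded in the discussion preceding the statement. The content of the result lies not in the argument itself but in the recognition that the general epimorphism theorem subsumes the quotient case, exactly as in the familiar passage from group and ring homomorphism theorems to their quotient corollaries.
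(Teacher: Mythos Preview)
Your proposal is correct and mirrors the paper's approach exactly: the paper observes (citing \cite{univalg}) that the quotient map $\rho:a\mapsto[a]$ is an epimorphism and then invokes Theorem~\ref{epi}, which is precisely what you do, with the well-definedness and surjectivity details spelled out.
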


An example of a congruence relation on an algebra is furnished by an ultrafilter, which we will have a chance to see in the next section.

\section{Cartesian Products, Ultrapowers, \& First-order Properties}\label{predfncsec}
The predecessor function $p$ on $\omega$ plays an important role in this section and it is defined by
\begin{displaymath}
   p(n) = \left\{
     \begin{array}{lr}
       0 & \text{if}\; n=0; \\
       n-1 & \text{otherwise}.
     \end{array}
   \right.
\end{displaymath}

$p$ has exactly one fixed point, namely $0$. Theorem \ref{unaryS} above in essence states that algebras whose families of operations are unary admit a simple characterization in terms of fixed points. Applying this characterization, we see that $\mathcal{A}=(\omega, p)$ is a Ramsey algebra. We will call this algebra the \emph{predecessor algebra} and we will denote it by $\mathcal{A}$ throughout this section. The predecessor algebra allows us to show that the following three conjectures, desirable as they may be, are not true:
\begin{enumerate}
\item Cartesian products of Ramsey algebras are Ramsey algebras.
\item Ultrapowers of Ramsey algebras are Ramsey.
\item Algebras that are elementarily equivalent have the same Ramsey algebraic property.
\end{enumerate}
We will show that these statements are false in this section. We want to make a formal emphasis before proceeding:
\begin{remark}
All algebras in this section are assumed to be \emph{infinite} unless otherwise stated, namely at the end of the section.
\end{remark}

The question on whether Cartesian products of Ramsey algebras are Ramsey is a question that has been asked since the inception of the subject. Surprisingly, this long-standing question can be easily answered by the predecessor algebra. We state the definition of a Cartesian product of algebras for convenience:

Given a family of algebras $\mathcal{A}_\xi=(A_\xi, \mathcal{F}_\xi)$ (with $\xi\in\kappa$, $\kappa$ some cardinal) of the same language $\mathcal{L}$, the Cartesian product $\mathcal{A}=(A, \mathcal{F})$ of this family is such that
\begin{enumerate}
\item[a.] the domain of $\mathcal{A}$ is the Cartesian product $\prod_{\xi<\kappa}A_\xi$ and
\item[b.] for each operation $f\in\mathcal{F}$, if $f$ interprets the function symbol $F\in\mathcal{L}$, then $f$ acts coordinate-wise and each coordinate, say the $\xi$th coordinate, is acted by the corresponding operation $f_\xi$, which also interprets $F$.
\end{enumerate}

\begin{proposition}
The infinite Cartesian product $\mathcal{B}=\prod_{i\in\omega}\mathcal{A}$ of $\mathcal{A}=(\omega, p)$ is not a Ramsey algebra.
\end{proposition}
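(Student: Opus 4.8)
The plan is to recognize that $\mathcal{B}$ is again a \emph{unary} algebra and then apply the characterization of unary Ramsey algebras, Theorem \ref{unaryS}. Indeed, the product operation, call it $P$, acts coordinate-wise by $P(\vec{x})(i)=p(\vec{x}(i))$ and is therefore a single unary operation on $\prod_{i\in\omega}\omega={}^\omega\omega$. By the Remark on unary families, $\ot(\{P\})$ consists precisely of the iterated composites $\{\id, P, P^2, P^3, \ldots\}$, and a direct computation gives $P^k(\vec{x})(i)=p^k(\vec{x}(i))=\max(\vec{x}(i)-k,\,0)$ for every $i$.

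Next I would pin down the fixed points. Since $\vec{x}$ is fixed by $P$ exactly when $p(\vec{x}(i))=\vec{x}(i)$ for all $i$, and $0$ is the unique fixed point of $p$, the only fixed point of $P$ in $\mathcal{B}$ is the constant sequence $\vec{0}=\langle 0,0,0,\ldots\rangle$. Theorem \ref{unaryS} then says that $\mathcal{B}$ is a Ramsey algebra if and only if every $\vec{x}\in{}^\omega\omega$ can be sent to $\vec{0}$ by finitely many applications of $P$, i.e. if and only if for each $\vec{x}$ there is a single $k$ with $P^k(\vec{x})=\vec{0}$.

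Finally I would exhibit a sequence defeating this requirement. Take $\vec{x}=\langle 0,1,2,3,\ldots\rangle$ (any unbounded sequence will do). For any fixed $k$ we have $P^k(\vec{x})(k+1)=\max((k+1)-k,0)=1\neq 0$, so $P^k(\vec{x})\neq\vec{0}$ for every $k$; hence $\vec{x}$ cannot be driven to the unique fixed point by any orderly term. By Theorem \ref{unaryS}, $\mathcal{B}$ fails to be a Ramsey algebra.

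The argument is short because the real content is structural rather than combinatorial: the only genuine steps are observing that a product of unary algebras is unary, so that Theorem \ref{unaryS} applies and $\ot(\{P\})$ reduces to the single tower of iterates $P^k$, and identifying the constant zero sequence as the lone fixed point. The one mild subtlety, and what I expect to be the crux one must not overlook, is that a single uniform $k$ is demanded across all coordinates simultaneously; the fact that each individual coordinate \emph{does} reach $0$ eventually is not enough, and it is exactly this uniformity that an unbounded sequence destroys.
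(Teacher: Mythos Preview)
Your proof is correct and follows essentially the same approach as the paper: both apply Theorem~\ref{unaryS} to the unary product algebra, identify the constant zero sequence as the unique fixed point, and use the identity sequence $\langle 0,1,2,\ldots\rangle$ as a witness that no iterate $P^k$ can reach it. The paper's computation $f^n(\omega)=\langle 0,\ldots,0,1,2,3,\ldots\rangle$ (with $n+1$ leading zeros) is the same observation as your $P^k(\vec{x})(k+1)=1$, just phrased slightly differently.
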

\begin{proof}
We again exploit the characterization given by Theorem \ref{unaryS}. Consider the element $\omega=\langle0, 1, 2, \ldots\rangle\in\prod_{i\in\omega}\omega$ of the Cartesian product. (Note that we are using $\omega$ both as the set of natural numbers as well as a member of $\prod_{i\in\omega}\omega=\omega^\omega$.) It can not be sent to the fixed point $\varphi=\langle 0, 0, \ldots\rangle$ by any composition of the sole operation $f$ in the algebra $\mathcal{B}$ since each orderly term has the form $f^n$ and $f^n(\omega)=\langle 0, 0, \ldots, 0, 1, 2, 3, \ldots\rangle$, where the initial segment consisting of $0$'s has length $n+1$.
\end{proof}

Note that the infinity of the product is essential here. The case when the product is finite requires a slightly different algebra. Let $s$ be the successor function, let $F, G$ be a unary function symbols, and introduce the algebras $\mathcal{C}, \mathcal{D}$ with $F^{\mathcal{C}}=G^{\mathcal{D}}=d$ and $F^{\mathcal{D}}=G^{\mathcal{C}}=s$. By the characterization theorem, Theorem \ref{unaryS}, both algebras $\mathcal{C}$ and $\mathcal{D}$ are Ramsey, but the Cartesian product $\mathcal{C}\times\mathcal{D}$ is not Ramsey.

\begin{theorem}
Cartesian products, finite or infinite, of Ramsey algebras need not be Ramsey.
\end{theorem}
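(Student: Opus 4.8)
The plan is to treat the statement as the conjunction of two independent counterexamples---one for infinite products and one for finite products---and in each case to reduce the verification to the characterization of unary Ramsey algebras in Theorem \ref{unaryS}. Since every algebra involved is unary, being Ramsey amounts to checking that each element can be driven into the set of common fixed points by finitely many applications of the operations, and \emph{failure} of the Ramsey property amounts to exhibiting a single element that no composite can send into that set. Thus both halves reduce to a reachability analysis, which keeps the arguments entirely combinatorial.

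For the infinite half I would simply invoke the preceding proposition. The predecessor algebra $\mathcal{A}=(\omega,p)$ is Ramsey because $p^n$ sends $n$ to the unique fixed point $0$, whereas in the infinite power $\mathcal{B}=\prod_{i\in\omega}\mathcal{A}$ the element $\langle 0,1,2,\ldots\rangle$ resists every composite: the sole operation acts uniformly on all coordinates, so $p^n$ clears only the first $n+1$ entries while infinitely many larger entries survive, and no finite composite reaches the all-zeros fixed point. This already establishes the infinite case.

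For the finite half I would use the crossing construction of $\mathcal{C}$ and $\mathcal{D}$ described above. First I would confirm via Theorem \ref{unaryS} that each factor is Ramsey, by exhibiting a reachable common fixed point for its pair of operations. Then I would pass to $\mathcal{C}\times\mathcal{D}$, whose operations act by one function on the first coordinate and by the swapped function on the second, so that a single product operation moves one coordinate toward its target while moving the other away. The goal is to isolate an element whose two coordinates demand incompatible operations: any product word that advances the first coordinate to a common fixed point is forced, by the coordinate-synchronized action, to drive the second coordinate away, and symmetrically. Showing that this conflict defeats \emph{every} finite composite then yields, again by Theorem \ref{unaryS}, that $\mathcal{C}\times\mathcal{D}$ is not Ramsey.

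The main obstacle is precisely this last step in the finite case. The delicate point is that once a coordinate reaches a common fixed point it is stabilized by every operation, which a priori invites the strategy of parking one coordinate at its fixed point and then working freely on the other. The heart of the argument is therefore to rule out this sequential approach: one must verify that the uniform, coordinate-linked action of the product operations prevents the two coordinates from being brought to fixed points even one after the other, so that the obstructing element is genuinely unreachable against all composites rather than merely the naive ones. Pinning down that element and proving this global unreachability is where the real work lies.
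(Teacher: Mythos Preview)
Your infinite half is correct and is exactly what the paper does: it simply cites the preceding proposition on the infinite power of the predecessor algebra.

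For the finite half, your outline follows the paper's sketch, but the obstacle you single out---the parking strategy---is not merely ``where the real work lies''; it is fatal to any unary construction of this kind. Suppose $\mathcal{C}$ and $\mathcal{D}$ are unary Ramsey algebras in the same language. By Theorem~\ref{unaryS} each has a common fixed point, say $c$ and $c'$, and then $(c,c')$ is a common fixed point of every product operation. Given any $(x,y)$, choose a word $w$ in the function symbols with $w^{\mathcal{C}}(x)=c$; applying $w$ in the product yields $(c,y')$ for some $y'$. Since $c$ is fixed by every $F^{\mathcal{C}}$, every further product operation leaves the first coordinate at $c$, so choosing a word $w'$ with $(w')^{\mathcal{D}}(y')=c'$ and applying it gives $(c,c')$. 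Thus every element of $\mathcal{C}\times\mathcal{D}$ reaches a common fixed point, and by Theorem~\ref{unaryS} the product is Ramsey. In short, a finite product of unary Ramsey algebras is \emph{always} Ramsey, so neither the paper's paragraph (in which, incidentally, the symbol $d$ is never defined) nor your plan, which follows it, can deliver the promised finite counterexample. To establish the finite half of the theorem one must leave the unary setting; with operations of arity at least two the parking argument breaks down, because acting on one coordinate cannot be decoupled from consuming input on the other.
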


We can now take the quotient of the Cartesian product $\mathcal{B}$ above by a nonprincipal ultrafilter on $\omega$ to show that the resulting ultrapower is not Ramsey. This also establishes the fact that an algebra elementarily equivalent to a Ramsey algebra need not be Ramsey.

\begin{theorem}
No ultrapower of the predecessor algebra induced by a nonprincipal ultrafilter on $\omega$ is Ramsey.
\end{theorem}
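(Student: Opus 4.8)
The plan is to reduce the problem to the characterization of unary Ramsey algebras in Theorem~\ref{unaryS}. Write $U$ for the nonprincipal ultrafilter and let $\mathcal{A}^\omega/U$ denote the ultrapower, whose elements are classes $[g]$ of functions $g\in{}^\omega\omega$ under the relation $[g]=[h]\iff\{i:g(i)=h(i)\}\in U$. As the ultrapower is the quotient of the unary algebra $\mathcal{B}=\prod_{i\in\omega}\mathcal{A}$ by the congruence induced by $U$, it is again a unary system: its sole operation $\bar p$ acts by $\bar p([g])=[p\circ g]$. Thus Theorem~\ref{unaryS} applies, and $\mathcal{A}^\omega/U$ is Ramsey if and only if every class can be carried to a fixed point of $\bar p$ by finitely many applications of $\bar p$.

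First I would compute the fixed points. A class $[g]$ satisfies $\bar p([g])=[g]$ precisely when $\{i:p(g(i))=g(i)\}=\{i:g(i)=0\}\in U$; every such $g$ agrees $U$-almost everywhere with the constant-zero function $\mathbf 0$, so there is exactly one fixed point, namely $[\mathbf 0]$. Next, since $\{p\}$ is unary, its orderly terms are exactly the iterates $\id,\bar p,\bar p^2,\ldots$ (cf.\ the remark preceding Definition~\ref{FR}), so the Ramsey property reduces to the statement that for every $g\in{}^\omega\omega$ there is some $n\in\omega$ with $\bar p^{\,n}([g])=[\mathbf 0]$. Using $p^n(m)=\max\{m-n,0\}$, one sees that $\bar p^{\,n}([g])=[\mathbf 0]$ holds if and only if $\{i:g(i)\le n\}\in U$.

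Finally I would exhibit a class that cannot be collapsed. Taking $g=\id_\omega$, we get $\{i:g(i)\le n\}=\{0,1,\ldots,n\}$, a finite set. Since $U$ is nonprincipal it is free and contains no finite set, hence $\{i:g(i)\le n\}\notin U$ for every $n$. Therefore $[\id_\omega]$ cannot be sent to the fixed point $[\mathbf 0]$ by any $\bar p^{\,n}$, and Theorem~\ref{unaryS} shows the ultrapower is not Ramsey.

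The genuine content lies entirely in the fixed-point computation, which is also where the hypothesis enters: this is the step I would be most careful with. If $U$ were instead principal, generated by $\{i_0\}$, then $[g]=[\mathbf 0]$ would require only $g(i_0)=0$, every class could be collapsed, and indeed the ultrapower would be isomorphic to the Ramsey algebra $\mathcal A$. The nonprincipality of $U$ is exactly what prevents the finite sets $\{0,\ldots,n\}$ from lying in $U$, and this is the crux of the argument; the remaining steps are routine.
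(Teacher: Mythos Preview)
Your proof is correct and follows essentially the same route as the paper: apply Theorem~\ref{unaryS}, identify the unique fixed point $[\mathbf 0]$, and show that the class of the identity sequence $\omega=\langle 0,1,2,\ldots\rangle$ cannot be collapsed to it because the relevant sets are finite while $U$ is nonprincipal. The only cosmetic difference is that the paper phrases the collapse step via \L o\'s transfer of the first-order fact ``$n$ is the unique point reaching $0$ in exactly $n$ steps,'' whereas you compute $\bar p^{\,n}([g])=[\mathbf 0]\iff\{i:g(i)\le n\}\in U$ directly; the substance is identical.
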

\begin{proof}
We begin by noting a few facts:
\begin{enumerate}
    \item Being a fixed point is a first-order property when the family of functions is finite (a singleton in our case here). It being unique is also a first-order property.
    \item Every point can be sent to a fixed point ($0$ in particular) by finitely many applications of $p$ is \emph{not} a first-order property. The statement ``finitely-many" does not specify the exact number and it has infinitely many possibilities. We will need countably many disjunctions to express this fact. (This also allows us to give a compactness proof of the theorem, which we will give in the next corollary.)
    \item The point $c$ can be sent to a fixed point by exactly $n$ applications of $p$ is a first-order property, so is it the unique point that can be sent to the unique fixed point by exactly $n$ applications of $p$ a first-order property. This statement differs from the one above by the fact that the finite number $n$ is specified and so a finite number of conjunctions is sufficient to express it.
\end{enumerate}

We will now make use of the first-order properties above. Let $j$ be the canonical embedding of $\mathcal{A}$ into its ultrapower $\ult(\mathcal{A}, \mathcal{U})=\left(\prod_{i\in\omega}\omega\right)/\mathcal{U}$ along a nonprincipal ultrafilter $\mathcal{U}$ on $\omega$. We will call the associated predecessor function in the ultrapower $P$. Firstly, since $0$ is the unique fixed point of the predecessor function $p$, we see that $[j(0)]$ is the unique class that is fixed by $P$. Furthermore, $[j(n)]$ is the unique class that can be sent to $[j(0)]$ by exactly $n$ applications of $P$.

Given $\vec{b}\in\prod_{i\in\omega}\omega$, if $\vec{b}$ can be sent to the unique fixed point $[j(0)]$, then $[\vec{b}]$ must be $\mathcal{U}$-equivalent to $[j(n)]$ for some $n\in\omega$ by the uniqueness we just saw above. Thus, $\vec{b}$ can be an unbounded sequence, but it must be equal to $n$ on a $\mathcal{U}$-measure one set, i.e. $\{i\in\omega:\vec{b}(i)=n\}\in\mathcal{U}$. Now, the sequence $\omega=\langle 0, 1, 2, \ldots\rangle$, which is a member of $\prod_{i\in\omega}\omega$, cannot be $\mathcal{U}$-equivalent to any $[j(n)]$, for that would require the set
\begin{equation*}
\left\{i\in\omega:\omega(i)=n\right\}=\{n\}\in\mathcal{U},
\end{equation*}
which cannot happen since $\mathcal{U}$ is nonprincipal. This shows that $\omega$ cannot be sent to the unique fixed point $j[0]$ by an iterate of $P$. Therefore, the ultrapower $\ult(\mathcal{A}, \mathcal{U})$ is not Ramsey by the characterization of Theorem \ref{unaryS}. Since $\mathcal{U}$ is an arbitrary nonprincipal ultrafilter, the statement is true for any ultrapower along a nonprincipal ultrafilter $\mathcal{U}$ on $\omega$.
\end{proof}

\begin{corollary} \label{firstcompactness}
The notion of a Ramsey algebra is not necessarily preserved under elementary equivalence.
\end{corollary}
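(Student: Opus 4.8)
The plan is to give a self-contained compactness argument producing an algebra elementarily equivalent to the predecessor algebra $\mathcal{A}=(\omega, p)$ that fails to be Ramsey; since $\mathcal{A}$ itself is Ramsey, this settles the corollary. (Alternatively one could invoke the preceding theorem together with the fact that every structure is elementarily equivalent to each of its ultrapowers, but the compactness route advertised in the proof of that theorem is cleaner.)

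First I would expand the language $\{p\}$ by a fresh constant symbol $c$. For each $n\in\omega$, recall from item (3) in the proof of the preceding theorem that the assertion ``$x$ is sent to a fixed point by exactly $n$ applications of $p$'' is first-order; denote the corresponding formula by $\varphi_n(x)$. Concretely, $\varphi_0(x)$ is $p(x)=x$, and for $n\geq 1$ one may take $\varphi_n(x)$ to be $p^{n}(x)=p^{n+1}(x)\wedge p^{n-1}(x)\neq p^{n}(x)$.

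Next I would form the theory $T'=\Th(\mathcal{A})\cup\{\neg\varphi_n(c):n\in\omega\}$ in the expanded language and check finite satisfiability. Any finite $T_0\subseteq T'$ mentions $\neg\varphi_n(c)$ for only finitely many $n$, say all with $n\leq N$; interpreting $c$ in $\mathcal{A}$ as the number $N+1$ works, since in $\mathcal{A}$ the number $m$ reaches the fixed point $0$ in exactly $m$ steps, so $N+1$ satisfies $\neg\varphi_n(c)$ for every $n\leq N$ while $\mathcal{A}\models\Th(\mathcal{A})$ outright. By compactness $T'$ has a model $\mathcal{B}$, and the reduct of $\mathcal{B}$ to $\{p\}$ satisfies $\Th(\mathcal{A})$, hence is elementarily equivalent to $\mathcal{A}$.

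Finally I would explain why $\mathcal{B}$ is not Ramsey. Uniqueness of the fixed point is first-order and holds in $\mathcal{A}$, so it holds in $\mathcal{B}$; call the unique fixed point $0^{\mathcal{B}}$. By construction $\mathcal{B}\models\neg\varphi_n(c)$ for every $n$, so $c^{\mathcal{B}}$ is not carried to $0^{\mathcal{B}}$ by any iterate $p^{n}$, that is, by no element of $\ot(\{p\})$. By the characterization of unary Ramsey algebras (Theorem \ref{unaryS}), $\mathcal{B}$ is not a Ramsey algebra. The point to keep in view---rather than a genuine technical obstacle---is that the property characterizing Ramsey-ness for this algebra, namely that every element reaches the fixed point in \emph{finitely many} steps, is exactly the non-first-order property isolated in item (2) of the previous proof; compactness exploits this very gap to manufacture a witness $c^{\mathcal{B}}$ escaping to infinity while every first-order consequence of $\mathcal{A}$ is preserved. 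The residual checks---that each $\varphi_n$ is first-order and that finite fragments of $T'$ are satisfiable---are routine.
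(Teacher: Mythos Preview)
Your proof is correct and follows essentially the same compactness strategy as the paper: both add a constant $c$, write down a scheme asserting that $c$ never reaches the fixed point, verify finite satisfiability in $\mathcal{A}$ by interpreting $c$ as a sufficiently large natural number, and then invoke Theorem~\ref{unaryS} on the resulting model. The only cosmetic difference is that the paper also introduces a second constant $\zeta$ naming the fixed point and uses the simpler scheme $\{F^i(c)\neq\zeta:i\in\omega\}$ in place of your $\{\neg\varphi_n(c):n\in\omega\}$; neither choice affects the argument.
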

\begin{proof}
The corollary follows immediately from the elementary equivalence of an ultrapower. However, we provide an alternative proof using the compactness theorem here. This will also illustrate the point made in Fact 2 of the preceding proof. We will construct an elementarily equivalent algebra $\mathcal{B}$ for which some points of its universe cannot be sent to some fixed points of its associated operation $\phi$.

Let $\mathcal{L}=\{F\}$ be the language of the algebra $\mathcal{A}$. Augment $\mathcal{L}$ with the constant symbol $\zeta, c$ and call the language $\mathcal{L}^*$. Consider the $\mathcal{L}^*$-theory $T=\Th(\mathcal{A})\cup\{F(\zeta)=\zeta\}\cup\{F^i(c)\neq\zeta:i\in\omega\}$. Note that we immediately have $T\models c\neq\zeta$.

Let $\Delta$ be a finite subset of $T$. If $\Delta\cap\{F^i(c)\neq\zeta:i\in\omega\}\neq\varnothing$, let $n\in\omega$ be greatest such that $F^n(c)\neq\zeta\in\Delta$. Clearly, interpreting $\zeta$ by $0$ and interpreting $c$ by $n+1$, the expansion $\mathcal{A}_n$ of $\mathcal{A}$ to the expanded language $\mathcal{L}^*$ is a model of $\Delta$. Hence, by compactness, $T$ is satisfiable. Fix one such model and call it $\mathcal{B}^*$.

Now, $\mathcal{A}\models\exists!x(F(x)=x)$, so $\exists!x(F(x)=x)\in T$, whereby $\mathcal{B}^*\models\exists!x(F(x)=x)$. In particular, this implies that $T\models\forall x(F(x)=x\rightarrow x=\zeta)$. Now, according to $\Delta$, the point $c^{\mathcal{B}^*}\in B$ is such that $\phi^i(c^{\mathcal{B}^*})\neq\zeta^{\mathcal{B}^*}$ for each $i\in\omega$. This says that $c^{\mathcal{B}^*}$ cannot be sent to the only fixed point $\zeta^{\mathcal{B}^*}$ by finitely many applications of $\phi$. As such, the reduct $\mathcal{B}=(B, \phi)$ of $\mathcal{B}^*$ to $\mathcal{L}$ is clearly an algebra in which $\mathcal{B}\equiv \mathcal{A}$ and $\phi^i(c^{\mathcal{B}^*})\neq\zeta^{\mathcal{B}^*}$ for each $i\in\omega$, so $\mathcal{B}$ is not a Ramsey algebra by Theorem \ref{unaryS}.
\end{proof}

Before we end this section, we state two propositions about two special cases of Cartesian products. These special cases deal primarily with finite Ramsey algebras and they can be found in Section 6 of the thesis \cite{teohthesis}. The following theorem, which is part of Theorem 3.9 of \cite{teh2016ramsey}, will be applied. It should be reminded that Cartesian products are only defined for algebras modeling the same language $\mathcal{L}$.

\begin{theorem}\label{finitealg}
Suppose that $(A, \mathcal{F})$ is a finite algebra. Then it is Ramsey if and only if for each $\vec{b}\in{^\omega}A$, there exists $\vec{a}\in{^\omega}A$ such that $\vec{a}\leq_\mathcal{F}\vec{b}$ and $\fr_\mathcal{F}(\vec{a})$ is a singleton.
\end{theorem}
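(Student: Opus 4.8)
The plan is to prove the two implications separately, with the finiteness of $A$ entering only in the forward direction. Throughout I would lean on two elementary facts about reductions: that $\leq_\mathcal{F}$ is a preorder, so reductions compose, and that $\FR_\mathcal{F}(\vec{a})$ always contains $\vec{a}(0)$ (apply $\id_A$) and is therefore nonempty.

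For the backward direction, suppose every infinite sequence admits a reduction whose $\FR$-set is a singleton. Given an arbitrary infinite $\vec{b}$ and an arbitrary $X\subseteq A$, I would pick a reduction $\vec{a}\leq_\mathcal{F}\vec{b}$ with $\FR_\mathcal{F}(\vec{a})=\{c\}$. Since a one-element set is either contained in $X$ (when $c\in X$) or disjoint from it (when $c\notin X$), such $\vec{a}$ is automatically homogeneous for $X$, so the algebra is Ramsey by Definition \ref{RA}. This direction uses nothing about finiteness.

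The forward direction is the substantive one. Assume $(A, \mathcal{F})$ is Ramsey and fix an infinite sequence $\vec{b}$; enumerate $A=\{x_1, \ldots, x_m\}$, which is possible precisely because $A$ is finite. I would build a chain $\vec{b}=\vec{a}_0, \vec{a}_1, \ldots, \vec{a}_m$ with $\vec{a}_i\leq_\mathcal{F}\vec{a}_{i-1}$ for each $i$, by applying the Ramsey property once for each singleton: having obtained $\vec{a}_{i-1}$, choose $\vec{a}_i\leq_\mathcal{F}\vec{a}_{i-1}$ homogeneous for $X=\{x_i\}$. By transitivity of $\leq_\mathcal{F}$, the terminal term satisfies $\vec{a}_m\leq_\mathcal{F}\vec{b}$. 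Moreover $\FR_\mathcal{F}$ is monotone along $\leq_\mathcal{F}$: whenever $\vec{u}\leq_\mathcal{F}\vec{v}$, every $c\in\FR_\mathcal{F}(\vec{u})$ arises as $\vec{d}(0)$ for some $\vec{d}\leq_\mathcal{F}\vec{u}\leq_\mathcal{F}\vec{v}$, so $\FR_\mathcal{F}(\vec{u})\subseteq\FR_\mathcal{F}(\vec{v})$. Applying this to $\vec{a}_m\leq_\mathcal{F}\vec{a}_i$, homogeneity is inherited downward, so $\vec{a}_m$ is simultaneously homogeneous for every singleton $\{x_1\}, \ldots, \{x_m\}$.

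Set $\vec{a}=\vec{a}_m$. The main point, and the step I expect to need the most care, is the final pigeonhole observation that forces $\FR_\mathcal{F}(\vec{a})$ to be a singleton. For each $i$, homogeneity for $\{x_i\}$ means either $\FR_\mathcal{F}(\vec{a})\subseteq\{x_i\}$ or $x_i\notin\FR_\mathcal{F}(\vec{a})$. If the second alternative held for every $i$, then $\FR_\mathcal{F}(\vec{a})$ would be disjoint from all of $A=\{x_1, \ldots, x_m\}$ and hence empty, contradicting $\vec{a}(0)\in\FR_\mathcal{F}(\vec{a})$. Therefore the first alternative holds for some $i$, and nonemptiness upgrades $\FR_\mathcal{F}(\vec{a})\subseteq\{x_i\}$ to the equality $\FR_\mathcal{F}(\vec{a})=\{x_i\}$, a singleton, as required. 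The only place finiteness is genuinely used is in reducing over all singletons in finitely many steps; were $A$ infinite the chain would never terminate and the argument would collapse.
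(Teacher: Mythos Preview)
Your argument is correct. Both directions are handled cleanly: the backward direction is trivial as you note, and in the forward direction the successive-reduction trick over the finitely many singletons $\{x_1\},\ldots,\{x_m\}$, combined with the monotonicity $\FR_\mathcal{F}(\vec{u})\subseteq\FR_\mathcal{F}(\vec{v})$ whenever $\vec{u}\leq_\mathcal{F}\vec{v}$ and the nonemptiness of $\FR_\mathcal{F}(\vec{a})$, does force the terminal $\FR$-set to be a singleton. The one place to be careful is the monotonicity claim, but your justification via $\vec{d}(0)$ and transitivity of $\leq_\mathcal{F}$ is exactly right.

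As for comparison with the paper: there is nothing to compare against, because the paper does not supply a proof of this statement. Theorem~\ref{finitealg} is quoted from an external source (Theorem~3.9 of \cite{teh2016ramsey}) and stated without proof, so your self-contained argument actually goes beyond what the present paper provides. The approach you give---iterated application of the Ramsey property to the singleton colorings, exploiting finiteness of $A$ to terminate---is the standard and expected one, and is essentially the argument one finds in the cited reference.
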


\begin{proposition}
The Cartesian product of finitely many finite Ramsey algebras is a Ramsey algebra.
\end{proposition}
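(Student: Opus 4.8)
The plan is to reduce everything to the characterization of finite Ramsey algebras furnished by Theorem \ref{finitealg}. First I would observe that the Cartesian product $\mathcal{A}=(A,\mathcal{F})=\prod_{i=1}^m\mathcal{A}_i$ of finitely many finite algebras $\mathcal{A}_i=(A_i,\mathcal{F}_i)$ is itself finite, since its universe $A=\prod_{i=1}^m A_i$ is a finite set. Consequently Theorem \ref{finitealg} applies to $\mathcal{A}$, and it suffices to produce, for an arbitrary $\vec{b}\in{^\omega}A$, a reduction $\vec{a}\leq_\mathcal{F}\vec{b}$ for which $\FR_\mathcal{F}(\vec{a})$ is a singleton. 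Throughout I write $\vec{a}_i$ for the sequence of $i$th coordinates of a product sequence $\vec{a}$.

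The argument rests on two observations. The first is that, since each operation of $\mathcal{A}$ acts coordinate-wise by operations interpreting the same symbol, every orderly term over $\mathcal{F}$ also acts coordinate-wise, its $i$th coordinate being the corresponding orderly term over $\mathcal{F}_i$. This yields two consequences. On one hand, a reduction recipe over $\mathcal{F}$ (a choice of orderly terms together with consecutive finite blocks of indices) projects, coordinate by coordinate, to the same recipe over $\mathcal{F}_i$ applied to $\vec{b}_i$; conversely, a recipe witnessing a reduction of the single sequence $\vec{b}_i$ in $\mathcal{A}_i$, when applied verbatim to the full product sequence, produces a genuine reduction $\vec{a}\leq_\mathcal{F}\vec{b}$ whose $i$th coordinate is the prescribed one. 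On the other hand, any element of $\FR_\mathcal{F}(\vec{a})$ has the form $f(\sigma)$ with $i$th coordinate $f_i(\sigma_i)\in\FR_{\mathcal{F}_i}(\vec{a}_i)$, so $\FR_\mathcal{F}(\vec{a})\subseteq\prod_{i=1}^m\FR_{\mathcal{F}_i}(\vec{a}_i)$. In particular, if every $\FR_{\mathcal{F}_i}(\vec{a}_i)$ is a singleton, then $\FR_\mathcal{F}(\vec{a})$, being a nonempty subset of a singleton, is itself a singleton. The second observation, immediate from the fact that $\leq_\mathcal{F}$ is a preorder together with $\FR_\mathcal{F}(\vec{b})=\{\vec{a}(0):\vec{a}\leq_\mathcal{F}\vec{b}\}$, is that $\FR$ is monotone under reduction: if $\vec{d}\leq_{\mathcal{F}_i}\vec{c}$ then $\FR_{\mathcal{F}_i}(\vec{d})\subseteq\FR_{\mathcal{F}_i}(\vec{c})$, so once a coordinate's $\FR$ has been shrunk to a singleton it remains a singleton under any later reduction.

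These observations let me finish by induction on the coordinates. I would set $\vec{b}^{(0)}=\vec{b}$ and, at stage $k$ with $1\leq k\leq m$, assume $\vec{b}^{(k-1)}\leq_\mathcal{F}\vec{b}$ satisfies that $\FR_{\mathcal{F}_i}(\vec{b}^{(k-1)}_i)$ is a singleton for every $i<k$. Applying Theorem \ref{finitealg} to the finite Ramsey algebra $\mathcal{A}_k$ and the sequence $\vec{b}^{(k-1)}_k$ yields a reduction of $\vec{b}^{(k-1)}_k$ with singleton $\FR_{\mathcal{F}_k}$; lifting its recipe to the product gives $\vec{b}^{(k)}\leq_\mathcal{F}\vec{b}^{(k-1)}$ whose $k$th coordinate now has singleton $\FR_{\mathcal{F}_k}$, while monotonicity keeps the coordinates $i<k$ singletons. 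After stage $m$, transitivity of $\leq_\mathcal{F}$ chains $\vec{b}^{(m)}\leq_\mathcal{F}\cdots\leq_\mathcal{F}\vec{b}$, so $\vec{a}:=\vec{b}^{(m)}\leq_\mathcal{F}\vec{b}$ has every $\FR_{\mathcal{F}_i}(\vec{a}_i)$ a singleton; hence $\FR_\mathcal{F}(\vec{a})$ is a singleton, and Theorem \ref{finitealg} yields that $\mathcal{A}$ is Ramsey.

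I expect the main obstacle to be bookkeeping rather than conceptual: carefully formalizing the coordinate-wise lifting of a reduction recipe in the first observation, so that a single choice of orderly terms and consecutive index-blocks simultaneously witnesses the desired reduction in the target coordinate and legitimate reductions in all the others, and verifying that the reductions produced at distinct stages compose correctly under transitivity. The essential point making the whole scheme work is the reduction of a Ramsey condition (a two-valued property) to the much more rigid \emph{singleton} condition of Theorem \ref{finitealg}, which is preserved under further reductions and therefore can be arranged one coordinate at a time.
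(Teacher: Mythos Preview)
Your proposal is correct and follows essentially the same route as the paper: reduce, via Theorem~\ref{finitealg}, to producing a reduction with singleton $\FR$, then treat the coordinates one at a time, at each stage lifting a singleton-producing reduction recipe from one factor to the whole product. The paper carries this out for two factors and appeals to iteration, while you run the induction over all $m$ coordinates directly and make explicit the monotonicity $\vec{d}\leq\vec{c}\Rightarrow\FR(\vec{d})\subseteq\FR(\vec{c})$ that keeps earlier coordinates singletons; these are organizational rather than substantive differences.
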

\begin{proof}
It suffices to show that the statement holds for the Cartesian product of two Ramsey algebras.

Assume that $\mathcal{A}_1=(A_1, \mathcal{F})$ and $\mathcal{A}_2=(A_2, \mathcal{G})$ are finite Ramsey algebras. Denote by $\leq$ the reduction relation relation with respect to $\mathcal{F}\times\mathcal{G}$.

Given $X\subseteq A_1\times A_2$ and $\vec{b}\in{^\omega}(A_1\times A_2)$, we obtain a homogeneous reduction in two steps. First, we operate on the first coordinates by applying Theorem \ref{finitealg} above and then we apply the same theorem to the second coordinates of the resulting sequence. To be precise, Theorem \ref{finitealg} furnishes a sequence $f_0, f_1, f_2, \ldots$ of orderly terms of $\mathcal{F}$ and a desired sequence $\vec{\beta}_0, \vec{\beta}_1, \vec{\beta}_2, \ldots$ of finite sequences of the first coordinates of $\vec{b}$ such that $\vec{\beta}'=\langle f_0(\vec{\beta}_0), f_1(\vec{\beta}_1), f_2(\vec{\beta}_2), \ldots\rangle$ has the property $\fr_\mathcal{F}(\vec{\beta}')=\{\phi\}$ for some $\phi\in A_1$. Applying $(f_0, \id_{A_2})$, $(f_1, \id_{A_2})$, $(f_2, \id_{A_2}), \ldots$ to the terms of $\vec{b}$ corresponding to $\vec{\beta}_0, \vec{\beta}_1, \vec{\beta}_2, \ldots$, we obtain a $\vec{b}'\leq\vec{b}$ for which $\fr_{\mathcal{F}\times\mathcal{G}}(\vec{b}')$ consist of ordered pairs whose first coordinates are all equal to $\phi$.

Furnished by Theorem \ref{finitealg}, we may again obtain a sequence $(\id_{A_1}, g_0)$, $(\id_{A_1}, g_1)$, $(\id_{A_1}, g_2), \ldots$ for $\vec{b}'$ to obtain $\vec{a}\leq\vec{b}'\leq\vec{b}$ such that $\fr_{\mathcal{F}\times\mathcal{G}}(\vec{a})$ is now a singleton $\{(\phi, \gamma)\}$ for some $\gamma\in A_2$. Consequently, $\mathcal{A}_1\times\mathcal{A}_2$ is also a (finite) Ramsey algebra by the same theorem again.
\end{proof}

Using Theorem \ref{finitealg} on the finite part and then focusing on the infinite part, we obtain the following corollary.

\begin{corollary}
The Cartesian product of an infinite Ramsey algebra with a finite number of finite Ramsey algebras is a Ramsey algebra.
\end{corollary}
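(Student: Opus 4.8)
The plan is to reduce to the binary case and then peel off the finite coordinate first, exactly as the guiding phrase ``finite part and then $\ldots$ infinite part'' suggests. Since the Cartesian product of the finitely many finite Ramsey algebras is itself a finite Ramsey algebra by the preceding proposition, and since being Ramsey is an isomorphism invariant by Corollary \ref{isothm}, it suffices to show that the product $\mathcal{A}\times\mathcal{B}$ of an infinite Ramsey algebra $\mathcal{A}=(A, \mathcal{F})$ with a single finite Ramsey algebra $\mathcal{B}=(B, \mathcal{G})$ is Ramsey.

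Fix $X\subseteq A\times B$ and $\vec{b}\in{^\omega}(A\times B)$, and write $\vec{b}_1, \vec{b}_2$ for its sequences of first and second coordinates. First I would apply Theorem \ref{finitealg} to the finite algebra $\mathcal{B}$ and the sequence $\vec{b}_2$, mirroring each orderly term used on the second coordinate by an orderly term of the same shape on the first coordinate. Such mirroring is always possible because $\mathcal{A}$ and $\mathcal{B}$ model the same language $\mathcal{L}$, so every operation appearing in a term over one factor has a counterpart of equal arity in the other. This produces a reduction $\vec{b}'\leq_{\mathcal{F}\times\mathcal{G}}\vec{b}$ whose second-coordinate free reduct set is a singleton $\{\gamma\}$ for some $\gamma\in B$.

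The key structural observation, which I expect to be the crux, is that collapsing the finite coordinate makes the free reduct set of the product \emph{factor}. Because an orderly term over $\mathcal{F}\times\mathcal{G}$ is precisely a pair of equally shaped orderly terms, one over $\mathcal{F}$ and one over $\mathcal{G}$, and because any shape realizable over one factor is realizable over the other (again since the language is shared), one checks that $\FR_{\mathcal{F}\times\mathcal{G}}(\vec{b}')=\FR_{\mathcal{F}}(\vec{b}'_1)\times\{\gamma\}$, where $\vec{b}'_1$ is the first-coordinate sequence of $\vec{b}'$. Verifying both directions here, namely that every product reduct has second coordinate $\gamma$ and first coordinate in $\FR_{\mathcal{F}}(\vec{b}'_1)$, and that every such first coordinate is genuinely attained by a legitimate pair of matching-shape terms, is the one place where the bookkeeping must be carried out with care.

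Finally I would invoke the Ramsey property of the infinite factor $\mathcal{A}$. Put $X_\gamma=\{a\in A:(a, \gamma)\in X\}$ and choose $\vec{a}_1\leq_{\mathcal{F}}\vec{b}'_1$ homogeneous for $X_\gamma$. Lifting the orderly terms and finite segments that witness $\vec{a}_1\leq_{\mathcal{F}}\vec{b}'_1$ back to the product, once more mirroring shapes into the second coordinate, yields $\vec{a}\leq_{\mathcal{F}\times\mathcal{G}}\vec{b}'\leq_{\mathcal{F}\times\mathcal{G}}\vec{b}$ with $\FR_{\mathcal{F}\times\mathcal{G}}(\vec{a})=\FR_{\mathcal{F}}(\vec{a}_1)\times\{\gamma\}$ by the factorization above. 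Since $\FR_{\mathcal{F}}(\vec{a}_1)$ is contained in or disjoint from $X_\gamma$, the set $\FR_{\mathcal{F}}(\vec{a}_1)\times\{\gamma\}$ is correspondingly contained in or disjoint from $X$. Hence $\vec{a}$ is homogeneous for $X$, and $\mathcal{A}\times\mathcal{B}$ is a Ramsey algebra.
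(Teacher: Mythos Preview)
Your proposal is correct and follows exactly the two-step strategy the paper indicates: collapse the finite factor to a singleton via Theorem~\ref{finitealg}, then invoke the Ramsey property of the infinite factor on the slice $X_\gamma$. In fact your bookkeeping is more careful than the paper's own sketch of the preceding proposition: you correctly observe that an orderly term over $\mathcal{F}\times\mathcal{G}$ is a single $\mathcal{L}$-term acting coordinate-wise, so the ``mirroring'' is forced to use the same shape in both coordinates (whereas the paper's shorthand $(f_i,\id_{A_2})$ is literally ill-typed unless $f_i$ is unary).
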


\section{Subalgebras \& Extensions}
A subalgebra of a given algebra is a subset of the universe of the algebra closed under the associated operations. To be precise, let $\mathcal{A}=(A, \mathcal{F})$ be an algebra, let $A'\subseteq A$, and let $\mathcal{F}'=\{f':f'=f\upharpoonright A', f\in\mathcal{F}\}$. Then $\mathcal{A}'=(A', \mathcal{F}')$ is said to be a \emph{subalgebra} of $\mathcal{A}$ if $A'$ is closed under all operations $f'\in\mathcal{F}'$. In what follows, the subalgebra relation will be invariably denoted by the subset relation $\subseteq$, e.g. $\mathcal{A}'\subseteq\mathcal{A}$.
If $(A', \mathcal{F}')$ is a subalgebra and $(A\setminus A', \{f\upharpoonright(A\setminus A'):f\in\mathcal{F}\})$ happens to be a subalgebra as well, then we express this subalgebra as $\mathcal{A}\setminus\mathcal{A}'$. A subalgebra of an algebra is said to be \emph{proper} if the universe of the former is a proper subset of the latter; a subalgebra of an algebra $\mathcal{A}$ is \emph{nontrivial} if it is not the empty algebra nor is it $\mathcal{A}$ itself.

It is intuitively clear that, if an algebra is Ramsey, any subalgebra of it is also Ramsey. Such is indeed the case (cf. \cite{teoh2017matrices}):

\begin{theorem}\label{Ramseysubalgebra}
Every subalgebra $\mathcal{A}'$ of a Ramsey algebra $\mathcal{A}$ is a Ramsey algebra.
\end{theorem}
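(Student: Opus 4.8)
The plan is to push the problem up into the ambient Ramsey algebra $\mathcal{A}$, solve it there, and then pull the resulting homogeneous reduction back down, using crucially that $A'$ is closed under the operations of $\mathcal{F}$. Concretely, suppose $\vec{b}\in{^\omega}A'$ and $X\subseteq A'$. Since $A'\subseteq A$, the sequence $\vec{b}$ is equally an infinite sequence of $A$ and $X$ is equally a subset of $A$; because $\mathcal{A}$ is Ramsey, I may choose $\vec{a}\leq_\mathcal{F}\vec{b}$ that is homogeneous for $X$ with respect to $\mathcal{F}$, so that $\FR_\mathcal{F}(\vec{a})$ is either contained in or disjoint from $X$.

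The heart of the argument is a compatibility claim: every orderly term $f\in\ot(\mathcal{F})$, when restricted to arguments in $A'$, agrees with a corresponding orderly term over $\mathcal{F}'$, and in particular maps tuples from $A'$ back into $A'$. I would prove this by induction on the construction of orderly terms given by the three lucid rules following Definition \ref{ordcomp}. The base cases are immediate, since $\id_A\upharpoonright A'=\id_{A'}$ and each $f\upharpoonright A'$ is by definition the member $f'$ of $\mathcal{F}'$; the inductive step merely observes that orderly composition commutes with restriction to the closed set $A'$. Closure of $A'$ under $\mathcal{F}$ guarantees that the intermediate and final values never leave $A'$, so the induction goes through and in fact sets up a correspondence between $\ot(\mathcal{F})$ restricted to $A'$ and $\ot(\mathcal{F}')$.

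With this claim in hand the remaining steps are routine. First, writing $\vec{a}(j)=f_j(\bar{b}_j)$ with $f_j\in\ot(\mathcal{F})$ and $\bar{b}_j$ a finite subsequence of $\vec{b}$ (hence a tuple from $A'$), the claim shows each $\vec{a}(j)$ lies in $A'$ and equals the value of the corresponding orderly term over $\mathcal{F}'$; thus $\vec{a}\in{^\omega}A'$ and in fact $\vec{a}\leq_{\mathcal{F}'}\vec{b}$. Second, the same correspondence yields $\FR_{\mathcal{F}'}(\vec{a})=\FR_\mathcal{F}(\vec{a})$ as subsets of $A'$. Since $\FR_\mathcal{F}(\vec{a})$ is contained in or disjoint from $X$, so is $\FR_{\mathcal{F}'}(\vec{a})$, and therefore $\vec{a}$ witnesses homogeneity for $X$ in $\mathcal{A}'$. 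I expect the only real obstacle to be stating and verifying the compatibility claim cleanly --- in particular keeping careful track of the correspondence between orderly terms over $\mathcal{F}$ and over $\mathcal{F}'$ --- whereas the transfer of the reduction and of $\FR$ then follows formally.
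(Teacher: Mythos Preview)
Your proof is correct. The paper itself does not supply a proof of this theorem but merely cites \cite{teoh2017matrices}; the argument you outline---lift the data to $\mathcal{A}$, apply Ramseyness there, and use closure of $A'$ under $\mathcal{F}$ to see that the resulting reduction and its $\FR$-set live in $A'$ and coincide with their $\mathcal{F}'$-counterparts---is the natural one and is essentially what any proof of this fact must do.
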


Whether the converse or Theorem \ref{Ramseysubalgebra} is true remains an open question. If $\mathcal{A}=(A, \mathcal{F})$ is an algebra and $\mathcal{A}'$ is a proper subalgebra of $\mathcal{A}$, it is not necessarily true that $\mathcal{A}\setminus\mathcal{A}'$ is a subalgebra. Some functions in $\mathcal{F}$ may send tuples of elements of $A\setminus|\mathcal{A}'|$ into $|\mathcal{A}'|$.

Let us look at two sufficient conditions to ensure that an algebra is Ramsey whenever its nontrivial subalgebras are Ramsey. These sufficient conditions can be stated in some topological terms.

\begin{definition}
Let $\mathcal{A}=(A, \mathcal{F})$ be an algebra. We define a topology $\subalg(\mathcal{A})$ on $A$ by specifying the basic open sets to be precisely those subsets of $A$ that are the universes of the subalgebras of $\mathcal{A}$. Denote by $\mathfrak{B}(\mathcal{A})$ the set of basic open sets.
\end{definition}

Indeed, $\mathfrak{B}(\mathcal{A})$ forms a basis and it is in fact a Moore collection of subsets of $|\mathcal{A}|$; its members are closed under arbitrary intersections. As such, $\subalg(\mathcal{A})$ is Alexandroff.

\begin{proposition}
Let $\mathcal{A}=(A, \mathcal{F})$ be an algebra such that all of its proper subalgebras are Ramsey. Then $\mathcal{A}$ is also Ramsey if there exists a clopen $A'\in\mathfrak{B}(\mathcal{A})\setminus\{A, \varnothing\}$.
\end{proposition}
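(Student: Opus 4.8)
The plan is to exploit the clopen partition $A=A'\sqcup(A\setminus A')$ to reduce any Ramsey problem on $\mathcal{A}$ to one living inside a proper subalgebra. Fix an infinite sequence $\vec{b}$ of $A$ and a set $X\subseteq A$. Since $A'\in\mathfrak{B}(\mathcal{A})\setminus\{A,\varnothing\}$ is the universe of a proper, nontrivial subalgebra $\mathcal{A}'=(A',\mathcal{F}')$, and since $A'$ is closed under the operations, any sequence drawn from $A'$ has all of its orderly-term values again in $A'$; in other words $\FR_{\mathcal{F}}$ and $\FR_{\mathcal{F}'}$ agree on sequences whose terms lie in $A'$. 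By pigeonhole, either infinitely many terms of $\vec{b}$ lie in $A'$ or infinitely many lie in $A\setminus A'$, and passing to the corresponding subsequence is itself a reduction of $\vec{b}$ (via identity orderly terms, followed by transitivity of $\leq_{\mathcal{F}}$).

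First I would dispose of the easy half. If infinitely many terms of $\vec{b}$ lie in $A'$, let $\vec{c}$ be that subsequence, an infinite sequence of $A'$. Since $\mathcal{A}'$ is a proper subalgebra it is Ramsey by hypothesis, so I may choose $\vec{a}\leq_{\mathcal{F}'}\vec{c}$ homogeneous for $X\cap A'$; then $\vec{a}\leq_{\mathcal{F}}\vec{b}$ and $\FR_{\mathcal{F}}(\vec{a})=\FR_{\mathcal{F}'}(\vec{a})\subseteq A'$. Because this set sits inside $A'$, being contained in (respectively disjoint from) $X\cap A'$ is the same as being contained in (respectively disjoint from) $X$, so $\vec{a}$ is homogeneous for $X$ in $\mathcal{A}$.

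The substance of the argument lies in the other half, when infinitely many terms of $\vec{b}$ lie in $A\setminus A'$; call that subsequence $\vec{c}$. The natural move is to pass to $\langle\vec{c}\rangle$, the subalgebra of $\mathcal{A}$ generated by the terms of $\vec{c}$: if $\langle\vec{c}\rangle\neq A$, then $\langle\vec{c}\rangle$ is a proper subalgebra, hence Ramsey, and since $\vec{c}$ lives in it and every reduction of $\vec{c}$ stays inside it, its Ramsey property applied to $X\cap\langle\vec{c}\rangle$ finishes the argument exactly as in the previous paragraph. The main obstacle is that $A\setminus A'$ \emph{need not be a subalgebra} — orderly terms may send tuples from $A\setminus A'$ back into $A'$, so it is entirely possible that $\langle\vec{c}\rangle=A$ even though every single term of $\vec{c}$ avoids $A'$. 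This is precisely where I expect the full force of the clopen (rather than merely open) hypothesis to enter: because $A'$ is closed, $A\setminus A'$ is open, so for any single $x\in A\setminus A'$ the subalgebra generated by $x$ is a proper subalgebra contained in $A\setminus A'$.

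The remaining, and most delicate, step is therefore to use this local information to thin $\vec{c}$ to a further reduction $\vec{c}'\leq_{\mathcal{F}}\vec{c}$ whose generating subalgebra is proper, thereby folding the troublesome case $\langle\vec{c}\rangle=A$ back into the case already handled. I anticipate that this will come from a secondary selection argument: each term of $\vec{c}$ sits in some proper subalgebra $\subseteq A\setminus A'$, and one wants to extract an infinite reduction that stays within a single proper subalgebra (or at least whose terms jointly generate a proper one). Once such a $\vec{c}'$ is secured, the standing hypothesis that all proper subalgebras of $\mathcal{A}$ are Ramsey closes the proof; verifying that this thinning is always possible is the crux I would expect to require the most care.
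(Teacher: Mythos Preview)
Your proposal is incomplete: in the second case you never carry out the ``thinning'' step, and you say as much. As you describe it, that step would be genuinely hard---there is no general mechanism for reducing an infinite sequence sitting in an open set (a union of subalgebra universes) down into a single proper subalgebra, and your one-point observation (that the subalgebra generated by a single $x\in A\setminus A'$ lies in $A\setminus A'$) does not obviously globalize to the subalgebra generated by infinitely many such points.

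The paper's proof avoids this entirely by treating the hypothesis as symmetric in the two halves: from ``$A'$ is clopen'' the paper draws the conclusion that \emph{both} $A'$ and $A\setminus A'$ are proper subalgebras of $\mathcal{A}$, hence both Ramsey by hypothesis. The whole proof is then just your ``easy half'' done twice: pigeonhole produces a subsequence $\vec{b}'$ lying wholly in $A'$ or wholly in $A\setminus A'$, and the Ramsey property of whichever subalgebra receives it yields the homogeneous reduction. There is no generated-subalgebra argument and no secondary selection.

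Your instinct that ``open in $\subalg(\mathcal{A})$'' only guarantees a union of subalgebra universes, not a single one, is correct as pure topology; but the paper evidently reads the clopen hypothesis as giving $A\setminus A'$ the status of a subalgebra outright (compare the later example, where $\mathcal{A}$ and $\mathcal{B}\setminus\mathcal{A}$ are called ``a pair of nontrivial clopen basis sets''). Under that reading your second case collapses to a repeat of your first and the proof is two lines; without it, the argument you are attempting is a strict strengthening whose final step you have not supplied.
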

\begin{proof}
Let $\vec{b}\in{^\omega}A$, $X\subseteq A$, and let $A'\in\mathfrak{B}(\mathcal{A})\setminus\{A, \varnothing\}$ be clopen. Therefore, $A'$ and $A\setminus A'$ are Ramsey algebras.

By Pigeonhole, pick a subsequence $\vec{b}'$ of $\vec{b}$ all of whose terms are either members of $A'$ or otherwise. That is, $\vec{b}'(i)\in A'$ for each $i\in\omega$ or $\vec{b}'(i)\in A\setminus A'$ for each $i\in\omega$. In either case, a reduction $\vec{a}$ of $\vec{b}'$ that is homogeneous to $X$ can be found. Hence, since $\vec{b}$ and $X$ are arbitrary, it follows that $\mathcal{A}$ is a Ramsey algebra.
\end{proof}

\begin{proposition}\label{compact}
Suppose that $\mathcal{A}=(A, \mathcal{F})$ is an algebra such that all of its proper subalgebras are Ramsey and $\bigcup_{A'\in\mathfrak{B}(\mathcal{A})\setminus\{A\}}A'=A$. If $\subalg(\mathcal{A})$ is compact, then $\mathcal{A}$ is a Ramsey algebra.
\end{proposition}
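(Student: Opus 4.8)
The plan is to mimic the argument for the preceding clopen proposition, using compactness to replace the two-piece decomposition by a finite one. The three hypotheses dovetail as follows: the condition $\bigcup_{A'\in\mathfrak{B}(\mathcal{A})\setminus\{A\}}A'=A$ says precisely that the proper subalgebra-universes form an open cover of $A$ in the topology $\subalg(\mathcal{A})$; compactness then reduces this to a finite subcover; and the Ramsey-ness of proper subalgebras is what lets me finish once I have trapped a subsequence inside one piece.

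First I would invoke the covering hypothesis to regard $\{A' : A'\in\mathfrak{B}(\mathcal{A})\setminus\{A\}\}$ as an open cover of $A$. Since $\subalg(\mathcal{A})$ is compact, I extract a finite subcover $A_1,\ldots,A_n\in\mathfrak{B}(\mathcal{A})\setminus\{A\}$, so that $A=A_1\cup\cdots\cup A_n$ and each $\mathcal{A}_j=(A_j,\mathcal{F}_j)$ is a proper subalgebra, hence Ramsey by hypothesis (any empty piece being discarded). Now fix $\vec{b}\in{^\omega}A$ and $X\subseteq A$. Each term $\vec{b}(i)$ lies in at least one $A_j$; since there are finitely many indices $j$ but infinitely many terms, the Pigeonhole Principle yields a single $j$ and a subsequence $\vec{b}'$ of $\vec{b}$ with every term in $A_j$, so in particular $\vec{b}'\leq_\mathcal{F}\vec{b}$. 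Because $(A_j,\mathcal{F}_j)$ is Ramsey, applying its Ramsey property to the sequence $\vec{b}'\in{^\omega}A_j$ and the set $X\cap A_j\subseteq A_j$ produces $\vec{a}\leq_{\mathcal{F}_j}\vec{b}'$ with $\FR_{\mathcal{F}_j}(\vec{a})$ either contained in or disjoint from $X\cap A_j$.

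The crux, and the step I expect to require the most care, is transferring this subalgebra-level conclusion back to $\mathcal{A}$. The key observation is that $A_j$ is closed under every operation of $\mathcal{F}$, so each orderly term over $\mathcal{F}_j$ agrees, on arguments drawn from $A_j$, with the corresponding orderly term over $\mathcal{F}$; consequently $\vec{a}\leq_{\mathcal{F}_j}\vec{b}'$ upgrades to $\vec{a}\leq_\mathcal{F}\vec{b}'$, and by transitivity of $\leq_\mathcal{F}$ we get $\vec{a}\leq_\mathcal{F}\vec{b}$. The same closure yields the identity $\FR_\mathcal{F}(\vec{a})=\FR_{\mathcal{F}_j}(\vec{a})\subseteq A_j$. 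Hence if $\FR_{\mathcal{F}_j}(\vec{a})\subseteq X\cap A_j$ then $\FR_\mathcal{F}(\vec{a})\subseteq X$; and if $\FR_{\mathcal{F}_j}(\vec{a})$ is disjoint from $X\cap A_j$, then since $\FR_\mathcal{F}(\vec{a})\subseteq A_j$ it is in fact disjoint from all of $X$.

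In either case $\vec{a}$ is homogeneous for $X$ with respect to $\mathcal{F}$, and since $\vec{b}$ and $X$ were arbitrary, $\mathcal{A}$ is a Ramsey algebra. The only delicate bookkeeping lies in verifying the identity $\FR_\mathcal{F}(\vec{a})=\FR_{\mathcal{F}_j}(\vec{a})$ together with the reduction-relation transfer $\vec{a}\leq_{\mathcal{F}_j}\vec{b}'\Rightarrow\vec{a}\leq_\mathcal{F}\vec{b}'$; both rest on $A_j$ being a genuine subalgebra (closed under the operations), not merely an open set, which is exactly why the basic open sets of $\subalg(\mathcal{A})$ were defined to be subalgebra-universes.
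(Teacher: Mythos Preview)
Your proof is correct and follows essentially the same route as the paper: extract a finite subcover by compactness, use Pigeonhole to trap a subsequence in one proper subalgebra, and apply that subalgebra's Ramsey property. You are simply more explicit than the paper about the transfer step (working with $X\cap A_j$ and verifying $\FR_\mathcal{F}(\vec{a})=\FR_{\mathcal{F}_j}(\vec{a})$), which the paper leaves implicit.
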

\begin{proof}
Let $\vec{b}\in{^\omega}A$ and $X\subseteq A$. Let $A_1, \ldots, A_n$ be a finite subcover of $\bigcup_{A'\in\mathfrak{B}(\mathcal{A})\setminus\{A\}}A'=A$. By Pigeonhole, there exists a subsequence $\vec{b}'$ of $\vec{b}$ all of whose terms belong in $A_k$ for some $k\in\{1, \ldots, n\}$. Then by the fact that $A_k$ along with the restricted functions form a Ramsey algebra, the sequence $\vec{b}'$ has a reduction $\vec{a}$ homogeneous for $X$. Therefore, $\vec{b}$ has a reduction homogeneous for $X$. Since $\vec{b}$ and $X$ are arbitrary, it follows that $\mathcal{A}$ is a Ramsey algebra.
\end{proof}

\begin{example}
Let us take a look at the topology $\subalg(\mathcal{A})$ for the predecessor algebra $\mathcal{A}$. The topology is very coarse because the only subalgebras are the empty algebra, the algebra $\mathcal{A}$ itself, and the algebra $(\{0\}, p)$. The topology does not have nontrivial clopen sets, but it is trivially compact. In contrast, let us also take a look at the topology $\subalg(\mathcal{B})$ for $\mathcal{B}$ constructed in the proof of Corollary \ref{firstcompactness}. The empty algebra, the subalgebras $\mathcal{A}$, $\mathcal{B}$ itself, and $\mathcal{B}\setminus\mathcal{A}$ are all members of $\mathfrak{B}(\mathcal{B})$. The open sets $\mathcal{A}, \mathcal{B}\setminus\mathcal{A}$ are a pair of nontrivial clopen basis sets that is dual to each other, but note that $\mathcal{A}$ is Ramsey but $\mathcal{B}\setminus\mathcal{A}$ is not. There is nothing much we can say about compactness unless we delve into the specifics of $\mathcal{B}$, considering the types it can realize, but we will not do so here.
\end{example}

We end our discussion of the topology with a few observations about $\subalg(\mathcal{A})$ in the case when $\mathcal{A}$ is a unary system. For any such algebra, the set
\begin{equation*}
    S=\{c\in A:f(c)=c\;\text{for each}\;f\in\mathcal{F}\}
\end{equation*}
of fixed points for $\mathcal{F}$ is a subalgebra of $\mathcal{A}$, i.e. $S\in\mathfrak{B}(\mathcal{A})$. In fact, the subspace topology on $S$ induced by the topology $\subalg(\mathcal{A})$ is discrete. When $\mathcal{A}$ is Ramsey, we can say even more about $S$; this is in fact a reformulation of Theorem \ref{unaryS}:

\begin{theorem}[Characterization of Ramsey Unary Algebras, topological formulation]
Let $\mathcal{A}=(A, \mathcal{F})$ be a unary system equipped with the topology $\subalg(\mathcal{A})$. Then $\mathcal{A}$ is a Ramsey algebra if and only if the set $S$ of fixed points is dense in $A$.
\end{theorem}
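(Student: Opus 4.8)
The plan is to reduce this topological statement to the algebraic characterization already recorded in Theorem \ref{unaryS} by identifying the minimal open neighborhoods of the Alexandroff space $\subalg(\mathcal{A})$. Density of $S$ means that every nonempty open set meets $S$, so my goal is to show that this is exactly the condition that every point can be sent into $S$ by an orderly term.

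First I would pin down, for each $a\in A$, the smallest open set $N_a$ containing $a$. Since $\subalg(\mathcal{A})$ is Alexandroff---its basic open sets, the universes of subalgebras, being closed under arbitrary intersection---the set $N_a$ exists and equals the intersection of all subalgebra-universes containing $a$, i.e.\ the universe of the subalgebra generated by $a$. Because $\mathcal{F}$ consists of unary operations, this generated subalgebra is precisely the orbit
\[
N_a=\{F(a):F\in\ot(\mathcal{F})\},
\]
using the Remark that for unary $\mathcal{F}$ the orderly terms are exactly the composites of members of $\mathcal{F}\cup\{\id_A\}$.

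Second I would translate density into this language. Using the Alexandroff property, $S$ is dense if and only if every minimal neighborhood $N_a$ meets $S$: the forward direction is immediate since each $N_a$ is a nonempty open set, and conversely any nonempty open $U$ contains $N_a$ for any chosen $a\in U$, so $U\cap S\supseteq N_a\cap S\neq\varnothing$. Now $N_a\cap S\neq\varnothing$ says exactly that there is an $F\in\ot(\mathcal{F})$ with $F(a)\in S$. Hence $S$ is dense if and only if for every $a\in A$ some orderly term sends $a$ into $S$, which is precisely the right-hand side of Theorem \ref{unaryS}; invoking that theorem then closes both directions simultaneously.

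The main obstacle---really the only nontrivial point---will be the clean identification of $N_a$ with the orbit $\{F(a):F\in\ot(\mathcal{F})\}$. One must verify that this orbit is genuinely a subalgebra (closure under each $f\in\mathcal{F}$ holds because $f\circ F\in\ot(\mathcal{F})$ whenever $F\in\ot(\mathcal{F})$) and that no smaller subalgebra contains $a$ (any subalgebra containing $a$ is closed under all composites, hence contains every $F(a)$), so that it is indeed the minimal basic open neighborhood of $a$. Once this is in hand, the equivalence with density and the appeal to Theorem \ref{unaryS} are routine.
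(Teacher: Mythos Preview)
Your proposal is correct and follows essentially the same approach as the paper's proof: both reduce to Theorem \ref{unaryS} by identifying the minimal open neighborhood of a point $a$ with the subalgebra generated by $a$, and then observing that this subalgebra meets $S$ exactly when some orderly term (equivalently, some finite composite of functions in $\mathcal{F}$) sends $a$ into $S$. Your write-up is somewhat more explicit about the Alexandroff structure and the orbit description $N_a=\{F(a):F\in\ot(\mathcal{F})\}$, but the underlying argument is the same.
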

\begin{proof}
Our reference theorem is again Theorem \ref{unaryS}. Suppose $\mathcal{A}$ is Ramsey and let $a\in A$. Then any open set containing $a$ will have a nonempty intersection with $S$. This is because every open set will contain the smallest open set containing $a$, which is the subalgebra generated by $a$. But since $a$ can be sent into $S$ by finitely many applications of the members of $\mathcal{F}$, the subalgebra generated by $a$ has a nonempty intersection with $S$.

Conversely, suppose that $S$ is dense in $\subalg(\mathcal{A})$. Let $a\in A$ and let $U$ be the subalgebra generated by $a$. Then, by $U\in\subalg(\mathcal{A})$ and density, we have $U\cap S\neq\varnothing$. Thus, if $c\in U\cap S$, then there is a way to send $a$ to $c\in S$ using finitely many members of $\mathcal{F}$.
\end{proof}

We continue to explore the converse question, now with a specific example that is somewhere ``in between" a theorem that we are after. There is some triviality involved and it is the reason we considered the smaller collection $\bigcup_{A'\in\mathfrak{B}(\mathcal{A})\setminus\{A\}}A'=A$ in Proposition \ref{compact} above. Consider the algebra $\mathcal{Z}=(\mathbb{Z}, f, g)$, where $f(x)=x+1$ and $g(x)=x-1$.\footnote{This example of an algebra possessing only trivial subalgebras, i.e. the empty algebra and the algebra itself, is given by Qiaochu Yuan.} Then it is easy to check that the only subalgebras of $\mathcal{Z}$ are trivial and that $\mathcal{Z}$ is not a Ramsey algebra by Theorem \ref{unaryS}. Thus, the relevant question is whether an algebra must be Ramsey if nontrivial subalgebras of it exist and if all of them are Ramsey. This question still seems to be elusive. The algebra $\mathcal{D}$ that we will now study is not Ramsey, has countably many isomorphic copies of itself embedded in within, and these isomorphic subalgebras are, therefore, not Ramsey by the isomorphism theorem above. Nevertheless, $\mathcal{D}$ is an algebra for which all \emph{other} subalgebras are Ramsey. We begin the construction of $\mathcal{D}$ by looking at one-point extensions of Ramsey algebras.

Suppose that $\mathcal{A}=(A, \mathcal{F})$ is a Ramsey algebra. Consider the new universe $A'=A\cup\{\alpha\}$ augmented by a new symbol $\alpha$; let $\mathcal{F}'$ be a family of operations on $A$, each member $f'$ being a fixed arbitrary extension of some member $f\in\mathcal{F}$ to $A'$. We claim that the new algebra $\mathcal{A}'=(A', \mathcal{F}')$ is also a Ramsey algebra.

To see this, let $\vec{b}\in{^\omega}A'$ and $X\subseteq A'$. If $\alpha$ appears only finitely many times in the terms of $\vec{b}$, we may drop those terms to obtain $\vec{b}'$ and, clearly, $\vec{b}'\leq_{\mathcal{F}'}\vec{b}$. Since $\vec{b}'$ consists of terms belonging in $A$ and $\mathcal{A}$ is a Ramsey algebra, $\vec{b}'$ clearly has a reduction $\vec{a}\leq\vec{b}'$ homogeneous for $X$ and so $\vec{b}$ has a reduction $\vec{a}\leq_{\mathcal{F}'}\vec{b}$ homogeneous for $X$.

On the other hand, suppose that $\alpha$ occurs infinitely many times in $\vec{b}$. We drop the other terms and obtain the subsequence $\vec{b}'$, which is a constant sequence all of whose terms are $\alpha$. As usual, $\vec{b}'\leq_{\mathcal{F}'}\vec{b}$, so it suffices to show the existence of a reduction of $\vec{b}'$ homogeneous for $X$. There are two possibilities to consider:
\begin{enumerate}
    \item All $F'\in\ot(\mathcal{F}')$ yield the value $\alpha$ when given input $(\alpha, \ldots, \alpha)$.
    \item There exists an $F'\in\ot(\mathcal{F}')$ such that $F'(\alpha, \ldots, \alpha)\neq\alpha$, thus $F'(\alpha, \ldots, \alpha)\in A$.
\end{enumerate}
In the former case, $\vec{b}'$ itself is homogeneous for $X$; in fact, $\fr_{\mathcal{F}'}(\vec{b}')=\{\alpha\}$, and so $\vec{b}'$ is clearly homogeneous for $X$. In the latter case, apply $F'$ on consecutive blocks of $\langle\alpha, \ldots, \alpha\rangle$ (of length the arity of $F'$) in $\vec{b}'$ so that we obtain the reduction $\vec{a}'\leq_{\mathcal{F}'}\vec{b}'$ all of whose terms lie in $A$. Since $\vec{a}'\in{^\omega}A$ and $\mathcal{A}$ is a Ramsey algebra, we have a reduction $\vec{a}\leq_{\mathcal{F}'}\vec{a}'$ that is homogeneous for $X$.

Dovetailing the observations above, we obtain the following lemma:
\begin{lemma}
Any one-point extension of a Ramsey algebra also results in a Ramsey algebra.
\end{lemma}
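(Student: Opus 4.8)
The plan is to verify Definition \ref{RA} directly for $\mathcal{A}' = (A', \mathcal{F}')$, reducing every instance back to the Ramsey property of the original algebra $\mathcal{A} = (A, \mathcal{F})$. The single guiding observation is that each $f' \in \mathcal{F}'$ was chosen to extend some $f \in \mathcal{F}$, so $f'$ agrees with $f$ on every tuple drawn from $A$. Consequently, every orderly term over $\mathcal{F}'$ restricts on $A$-valued tuples to the corresponding orderly term over $\mathcal{F}$, and therefore $\FR_{\mathcal{F}'}(\vec{a}) = \FR_{\mathcal{F}}(\vec{a})$ whenever $\vec{a} \in {}^\omega A$. This transfer identity is what lets me import homogeneous reductions produced inside $\mathcal{A}$ back up to $\mathcal{A}'$.

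Given $\vec{b} \in {}^\omega A'$ and $X \subseteq A'$, I would first apply the pigeonhole principle to the single new point $\alpha$: either $\alpha$ occurs only finitely often in $\vec{b}$, or it occurs infinitely often. In the first case, deleting the finitely many $\alpha$-terms yields a subsequence $\vec{b}' \leq_{\mathcal{F}'} \vec{b}$ lying entirely in $A$; since $\mathcal{A}$ is Ramsey I obtain $\vec{a} \leq_{\mathcal{F}} \vec{b}'$ homogeneous for $X$, and the transfer identity together with transitivity of $\leq_{\mathcal{F}'}$ promotes $\vec{a}$ to a homogeneous reduction of $\vec{b}$ in $\mathcal{A}'$.

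In the second case, deleting everything but $\alpha$ produces the constant sequence $\vec{b}' = \langle \alpha, \alpha, \ldots\rangle$, again a reduction of $\vec{b}$. Here I would split according to the behaviour of orderly terms at the tuple $(\alpha, \ldots, \alpha)$: if every $F' \in \ot(\mathcal{F}')$ returns $\alpha$ on such input, then $\FR_{\mathcal{F}'}(\vec{b}') = \{\alpha\}$ is a singleton and hence trivially contained in or disjoint from $X$; otherwise some $F'$ sends $(\alpha, \ldots, \alpha)$ into $A$, and applying that $F'$ to consecutive blocks of $\vec{b}'$ of length the arity of $F'$ gives a reduction $\vec{a}' \leq_{\mathcal{F}'} \vec{b}'$ whose terms all lie in $A$, after which the Ramsey property of $\mathcal{A}$ and the transfer identity finish the argument exactly as in the first case.

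The step I expect to carry the real weight is the transfer identity $\FR_{\mathcal{F}'}(\vec{a}) = \FR_{\mathcal{F}}(\vec{a})$ for $A$-valued sequences: one must be sure that no orderly term over $\mathcal{F}'$ can secretly reintroduce $\alpha$ when fed inputs from $A$, which rests on the fact that each extension $f'$ agrees with $f$ on $A$-tuples and that orderly composition only ever feeds outputs of subterms into a top operation, so an induction on term structure keeps every intermediate value inside $A$. Granting that, the case analysis is routine; the only mild subtlety is the dichotomy in the infinite case, which is forced precisely because an arbitrary extension may or may not trap $\alpha$ as a fixed value of the operations.
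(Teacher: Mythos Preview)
Your proposal is correct and follows essentially the same route as the paper: pigeonhole on whether $\alpha$ occurs finitely or infinitely often, and in the infinite case split according to whether every orderly term fixes $(\alpha,\ldots,\alpha)$ or some term pushes it into $A$. The only difference is that you make explicit the transfer identity $\FR_{\mathcal{F}'}(\vec{a}) = \FR_{\mathcal{F}}(\vec{a})$ for $A$-valued $\vec{a}$, which the paper uses tacitly when it invokes the Ramsey property of $\mathcal{A}$ on a sequence lying in $A$ and declares the resulting reduction homogeneous in $\mathcal{A}'$.
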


The notion of a direct limit is one that is familiar from model theory (cf. \cite{changkeisler1990model}) and we will not give it here.

\begin{theorem}\label{dirlimthm}
The direct limit of a sequence of Ramsey algebras need not be a Ramsey algebra.

\end{theorem}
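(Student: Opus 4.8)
The plan is to construct an explicit direct system of Ramsey algebras whose limit fails the fixed-point characterization of Theorem~\ref{unaryS}. The predecessor algebra is a Ramsey unary system, so the natural strategy is to realize the failed algebra $\mathcal{B}$ from Corollary~\ref{firstcompactness}---or an algebra of the same flavor---as a direct limit of genuinely Ramsey unary systems. Concretely, I would build a chain $\mathcal{A}_0 \hookrightarrow \mathcal{A}_1 \hookrightarrow \cdots$ of unary Ramsey algebras together with embeddings, so that each $\mathcal{A}_n$ has the property that every point can be sent to its fixed point by finitely many applications of the operation, but where the ``distance to the fixed point'' is unbounded across the system. The colimit should then contain a point lying at infinite distance from the fixed point, so that Theorem~\ref{unaryS} is violated and the limit is not Ramsey.

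The cleanest realization I would attempt uses the predecessor algebra directly. Take each $\mathcal{A}_n = (\omega, p)$, which is Ramsey, and choose the transition maps $\iota_{n,n+1}\colon \omega \to \omega$ to be the shift $k \mapsto k+1$. Each such map is a homomorphism of unary algebras in the appropriate sense (it commutes with $p$ away from the fixed point, and one must check the behavior at $0$ carefully---this is a point to verify, possibly forcing a small modification of $p$ near $0$ or a relabeling so that the shift genuinely respects the operation). The direct limit of this system is then a structure in which the old fixed point $0$ of $\mathcal{A}_n$ is pushed to $1$ in $\mathcal{A}_{n+1}$, to $2$ in $\mathcal{A}_{n+2}$, and so on, so that the element of the limit represented by the coherent sequence of fixed points is itself a point that cannot be returned to the limit's fixed point in finitely many steps. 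By Theorem~\ref{unaryS}, the limit is not Ramsey, even though every algebra in the system is.

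The key steps, in order, are: first, fix the index set $\omega$ with its usual order and specify the algebras $\mathcal{A}_n$ and the transition homomorphisms $\iota_{m,n}$, checking that they genuinely commute with the operations so that the system is a bona fide direct system of $\mathcal{L}$-structures; second, form the direct limit $\mathcal{A}_\infty$ and identify its universe and its single operation concretely (as equivalence classes of eventually-coherent threads); third, locate the unique fixed point of the limit operation and exhibit a specific element of $\mathcal{A}_\infty$---the thread of successive fixed points---that requires infinitely many applications of the operation to reach it; and finally, invoke Theorem~\ref{unaryS} to conclude non-Ramseyness. Since $\mathcal{A}_\infty$ is again a unary system, Theorem~\ref{unaryS} applies verbatim and no separate Ramsey-theoretic argument is needed.

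The main obstacle will be arranging the transition maps so that they are honest homomorphisms \emph{and} so that the limit operation has a single fixed point not reachable in finitely many steps. The predecessor function's behavior at $0$ (where $p(0)=0$) is delicate: a naive shift map need not commute with $p$ at the fixed point, so I expect to have to either pass to a slightly altered family of unary algebras whose operations are set up precisely so the chosen embeddings are homomorphic, or verify directly that the colimit operation is well-defined on threads and has exactly the fixed-point structure claimed. Getting the limit to \emph{be} a unary algebra in the sense of the paper (so that Theorem~\ref{unaryS} is applicable) and confirming that the problematic thread is a legitimate element of the limit---rather than something collapsed by the colimit equivalence---is where the real care is required; everything after that is an immediate appeal to the characterization theorem.
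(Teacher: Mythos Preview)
Your plan has a genuine obstruction that cannot be repaired by ``a small modification of $p$ near $0$.'' The difficulty you flag---that the shift $k\mapsto k+1$ fails to commute with $p$ at $0$---is not a technicality but a symptom of the following general fact: any homomorphism of unary algebras sends fixed points to fixed points. Indeed, if $\iota\colon\mathcal{A}_m\to\mathcal{A}_n$ is a homomorphism and $f^{\mathcal{A}_m}(c)=c$, then $f^{\mathcal{A}_n}(\iota(c))=\iota(f^{\mathcal{A}_m}(c))=\iota(c)$. Consequently, if every $\mathcal{A}_n$ is a Ramsey unary system and the transition maps are genuine homomorphisms, then the direct limit is \emph{always} Ramsey: every element of the limit is the image of some $a\in\mathcal{A}_n$, in $\mathcal{A}_n$ some iterate $F(a)$ lands in the fixed-point set, and the canonical map $\mathcal{A}_n\to\mathcal{A}_\infty$ carries that fixed point to a fixed point of the limit. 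Theorem~\ref{unaryS} then certifies the limit as Ramsey. So no choice of unary Ramsey systems and honest transition homomorphisms can produce a counterexample.

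There is also a conceptual slip in your description of the limit: a direct limit (colimit) of a chain does not contain ``threads'' or new elements at infinite distance---every element is already represented at some finite stage. You appear to be picturing an inverse-limit or ultrapower phenomenon, where genuinely new nonstandard points arise; that is precisely what Corollary~\ref{firstcompactness} exploits, but it is unavailable in a direct limit. The paper sidesteps this obstruction by abandoning unary systems entirely: it builds a chain of \emph{finite} algebras with a single \emph{binary} operation, each Ramsey by the one-point-extension lemma, and then shows directly (via an explicit sequence and parity coloring) that the infinite union admits no homogeneous reduction. The binary operation is essential, since Theorem~\ref{unaryS} would otherwise force the limit to be Ramsey.
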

\begin{proof}
We recursively construct a $\subseteq$-chain of algebras. Begin with $\mathcal{A}_0=(A_0, f)=(\{0\}, f_0)$, where $f_0$ is a binary function; it is clearly a Ramsey algebra. By recursion, we let $\mathcal{A}_{n+1}=(A_{n+1}, f_{n+1})=(A_n\cup\{n+1\}, f_n)$, where $f_{n+1}$ denotes the extension of $f_n$ from $A_n$ to $A_{n+1}$ given by
\begin{equation*}
   f_{n+1}(a, b) = \left\{
  \begin{array}{ll}
    n+1, & \hbox{$a=b=n+1$;} \\
    n, & \hbox{$a\neq b$ while either one equals $n+1$.}
  \end{array}
\right.
\end{equation*}
We denote the direct limit of these algebras by $\mathcal{D}$, the universe of the algebra being $\omega$ and the operation we denote by $f$. Note that each natural number is an idempotent element of $\mathcal{D}$, i.e. $f(n, n)=n$ for each $n\in\omega$.

Note that $f$ so defined on $\omega$ is not associative; for instance, we have $f(f(0, 0), 3)=2\neq 1=f(0, f(0, 3))$. Hence $\mathcal{D}$ is not a semigroup, whence Hindman's theorem clearly does not apply. In fact, we now show that, while each $\mathcal{A}_n$ is a Ramsey algebra because each is a one-point extension of the previous Ramsey algebra, the direct limit $\mathcal{D}$ is not Ramsey. We will show that $\vec{b}=\langle 0, 1, 2, \ldots\rangle$ does not have a reduction homogeneous for the set $X$ of even numbers. We first need the following claim.

\begin{claim}
If $F\in\ot(\{f\})$, $\sigma$ is a finite subsequence of $\vec{b}=\langle 0, 1, \ldots\rangle$, the first and last terms of $\sigma$ is $N$ and $M$, respectively, then $N\leq F(\sigma)<M$.
\end{claim}
\textit{Proof of Claim.} By induction on the complexity of $F$. For the atomic case, we have $F=f$. Let $\sigma=(N, M)$ be a finite subsequence of $\vec{b}$; we have $0\leq N<M$. Then we have $0\leq F(\sigma)=f(N, M)=M-1<M$. Now, suppose that the claim is true for the orderly terms $F_1, F_2$ the finite subsequences $\sigma_1, \sigma_2$ with first and last term pairs equaling $(N_1, M_1)$ and $(N_2, M_2)$, respectively, and with $M_1<N_2$. We want to show that the orderly term next up in complexity, namely $F(\bar{x}_1\ast\bar{x}_2)=f(F_1(\bar{x}_1), F_2(\bar{x}_2))$, also satisfies the statement of the claim. Indeed, by induction hypothesis, we have $N_1\leq F_1(\sigma_1)<M_1$ and $N_2\leq F_2(\sigma_2)<M_2$. Now, since $M_1<N_2$, we have $F_1(\sigma_1)<F_2(\sigma_2)$, whence $N_1\leq f(F_1(\sigma_1), F_2(\sigma_2))=F_2(\sigma_2)-1<M_2$ as desired. \qed (Claim.)

From the claim, it is easy to see that every reduction $\vec{a}\leq\vec{b}$ is an infinite subsequence of $\vec{b}$. In particular, this implies that, for each of these reductions $\vec{a}$, we have $\vec{a}(1), f(\vec{a}(0), \vec{a}(1))\in\fr(\{f\})$, but these two numbers have different parity, whence $\vec{a}$ cannot be homogeneous for $X$. This concludes the proof of the theorem.
\end{proof}

In the spirit of the converse question above, let us inspect the subalgebras of $\mathcal{D}$:

\begin{proposition}
Every nontrivial subalgebra of the direct limit $\mathcal{D}$ is some $\mathcal{A}_n$, some $\mathcal{D}\setminus\mathcal{A}_n$, or some $\mathcal{A}_n\setminus\mathcal{A}_m$.
\end{proposition}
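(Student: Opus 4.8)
The plan is to classify all nontrivial subalgebras of $\mathcal{D} = (\omega, f)$ by exploiting the explicit recursive structure of $f$ inherited from the direct limit construction. Recall that $f(n,n) = n$ for every $n$ (each natural number is idempotent) and that, for $a \neq b$, the value $f(a,b)$ equals $(\max\{a,b\}) - 1$ by the defining rule, since the extension introduced at each stage sends a pair whose larger element is the newly added point $n+1$ to $n$. The key structural fact I would isolate first is a closed form: \textbf{for all} $a, b \in \omega$, $f(a,b) = \max\{a,b\}$ if $a = b$ and $f(a,b) = \max\{a,b\} - 1$ if $a \neq b$. This is exactly the content needed and follows by tracing the recursion; I would verify it as a preliminary lemma.

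First I would observe that a nonempty subset $A' \subseteq \omega$ is (the universe of) a subalgebra precisely when it is closed under $f$. Using the closed form, membership of $f(a,b)$ in $A'$ imposes constraints tied to consecutive integers: whenever $a, b \in A'$ with $a < b$, we must have $b - 1 \in A'$. Iterating this downward-closure pressure is the heart of the argument. I would show that if $A'$ contains two distinct elements, then it must contain a full interval below its elements; more precisely, I would argue that the set of elements of $A'$ forms an interval of $\omega$ (possibly infinite), i.e. that $A'$ is \emph{convex} and, if it does not reach down to $0$, that its lower boundary behaves predictably. The main case analysis is whether $A'$ is bounded above or not, and whether it contains $0$.

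The classification then falls out by matching these intervals to the named algebras. An initial segment $\{0, 1, \ldots, n\}$ closed under $f$ is exactly $A_n$, the universe of $\mathcal{A}_n$. A final (cofinite) segment $\{m, m+1, m+2, \ldots\}$ closed under $f$ corresponds to $\mathcal{D} \setminus \mathcal{A}_{m-1}$, and a bounded interval $\{m, m+1, \ldots, n\}$ corresponds to $\mathcal{A}_n \setminus \mathcal{A}_{m-1}$ in the notation of the excerpt. I would check that each such interval is genuinely closed under $f$ using the closed form (the critical point being that $f(a,b) = \max\{a,b\} - 1$ never drops below $\min\{a,b\} \geq m$ when $a \neq b$, and equals $a$ when $a = b$), so these are the only candidates and each is realized.

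The step I expect to be the main obstacle is proving that every nontrivial subalgebra must be an \emph{interval} (convex), i.e. ruling out subalgebras with gaps. The subtlety is that closure under $f$ only forces $\max\{a,b\} - 1 \in A'$ from a pair $a \neq b$, so one must chain these deductions carefully: starting from two elements with a gap, I would repeatedly apply $f$ to generate the predecessors and show the gap is filled, while simultaneously confirming that no elements \emph{above} the given ones are forced (so that cofinite and bounded intervals remain possible). Handling the boundary at $0$ and the idempotence $f(n,n) = n$ (which prevents escaping an interval upward) requires care, but once convexity is established the remaining identification with $\mathcal{A}_n$, $\mathcal{D}\setminus\mathcal{A}_n$, and $\mathcal{A}_n\setminus\mathcal{A}_m$ is a routine bookkeeping of the three interval types against the stated definitions.
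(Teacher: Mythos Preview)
Your proposal is correct and follows essentially the same approach as the paper. Both arguments hinge on the observation that whenever a subalgebra contains two distinct elements $a<b$ one has $f(a,b)=b-1$, and iterating this forces the entire interval $\{a,a+1,\ldots,b\}$ into the subalgebra; the paper carries this out by fixing the minimum $w_0$ and an arbitrary larger element $w$, while you phrase it as establishing convexity via the closed form $f(a,b)=\max\{a,b\}-1$ for $a\neq b$, but the content is the same.
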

\begin{proof} We first note that structures of the forms $\mathcal{A}_n$, $\mathcal{D}\setminus\mathcal{A}_n$, and $\mathcal{A}_n\setminus\mathcal{A}_m$ are subalgebras of $\mathcal{D}$. We now show that these are the only proper subalgebras.

Let $W\subsetneq\omega$ be nonempty and let $w_0$ be least in $W$; we claim that $W$ has one of the stipulated forms. If $W$ is a singleton, then $W$ generates the algebra $(\{w_0\}, f)$, but $(\{w_0\}, f)$ is precisely $\mathcal{A}_{w_0}\setminus\mathcal{A}_{w_0-1}$ in the case $w_0>0$ or $\mathcal{A}_0$ in the case $w_0=0$.

On the other hand, suppose that $W$ is not a singleton. Let $w$ be a nonzero member of $W$ distinct from $w_0$. It is easy to see from the definition of $f$ that $\{w_0, w_0+1, \ldots, w\}\subseteq W$, whence it follows by induction that $\mathcal{A}_w\setminus\mathcal{A}_{w_0}$ is a subalgebra of the algebra generated by $W$. (In fact, the induction begins with the observation that $f(w, w_0)=w-1$ and the induction step takes the form $f(w-k-1, w_0)=w-k-2$, terminating when we hit $w_0$.) But $w$ is arbitrary, hence if the algebra generated by $W$ is nontrivial, then it must have the form $\mathcal{D}\setminus\mathcal{A}_n$ or $\mathcal{A}_n\setminus\mathcal{A}_{w_0}$ in the case $w_0>0$ or $\mathcal{A}_n$ in the case $w_0=0$. This concludes the proof that every nontrivial subalgebra of $\mathcal{D}$ assumes one of the stipulated forms.
\end{proof}

In the proof of Theorem \ref{dirlimthm} above, we have seen that $\mathcal{D}$ is not a Ramsey algebra. Now note that each $\mathcal{D}\setminus\mathcal{A}_n$ is isomorphic to $\mathcal{D}$, so these subalgebras are not Ramsey either by Corollary \ref{isothm}. For the other subalgebras, note that each $\mathcal{A}_n$ and each $\mathcal{A}_n\setminus\mathcal{A}_m$ is a finite algebra. The following theorem on finite algebras, appearing as part of Theorem 3.9 in \cite{teh2016ramsey} and which is related to Theorem \ref{finitealg} above, will show that the finite subalgebras $\mathcal{A}_n$ and $\mathcal{A}_n\setminus\mathcal{A}_m$ are, in fact, Ramsey.

\begin{theorem}\label{finitealgebra}
A finite algebra $(A, \mathcal{F})$ is a Ramsey algebra if and only if every nonempty subalgebra contains an idempotent element $a$ of $(A, \mathcal{F})$, i.e. $f(a, a, \ldots, a)=a$ for all $f\in\mathcal{F}$.
\end{theorem}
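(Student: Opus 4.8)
The plan is to deduce both directions from the singleton characterization of finite Ramsey algebras (Theorem \ref{finitealg}), so that the real content becomes translating ``every reduction can be pushed to a singleton'' into ``every subalgebra contains an idempotent'' and conversely. For the forward direction, suppose $(A, \mathcal{F})$ is Ramsey and let $B$ be a nonempty subalgebra. Since $A$ is finite so is $B$, and by Theorem \ref{Ramseysubalgebra} the subalgebra $(B, \mathcal{F}')$ is itself Ramsey. I would apply Theorem \ref{finitealg} to this finite Ramsey subalgebra: starting from any constant sequence $\vec{b}=\langle b, b, \ldots\rangle$ with $b\in B$, I obtain a reduction $\vec{a}\leq_{\mathcal{F}'}\vec{b}$ with $\fr_{\mathcal{F}'}(\vec{a})=\{c\}$ a singleton. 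Each term $\vec{a}(i)$ lies in $\fr_{\mathcal{F}'}(\vec{a})$ via the identity orderly term, so $\vec{a}(i)=c$ for all $i$; and for each $f\in\mathcal{F}$ of arity $n$ the value $f(c, \ldots, c)=f(\vec{a}(0), \ldots, \vec{a}(n-1))$ again lies in $\fr_{\mathcal{F}'}(\vec{a})=\{c\}$, forcing $f(c, \ldots, c)=c$. Hence $c\in B$ is the required idempotent, and this direction is routine given Theorems \ref{finitealg} and \ref{Ramseysubalgebra}.

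The converse is where the work lies. Assuming every nonempty subalgebra contains an idempotent, I would use Theorem \ref{finitealg} and show that every $\vec{b}\in{^\omega}A$ has a reduction with singleton $\fr$. The key device is monotonicity: if $\vec{a}\leq_\mathcal{F}\vec{b}$ then $\fr_\mathcal{F}(\vec{a})\subseteq\fr_\mathcal{F}(\vec{b})$, a finite set, so I may choose a reduction $\vec{a}\leq_\mathcal{F}\vec{b}$ minimizing $|\fr_\mathcal{F}(\vec{a})|$; write $V=\fr_\mathcal{F}(\vec{a})$. The crucial observation is that each tail shift $\vec{a}_{\geq k}=\langle\vec{a}(k), \vec{a}(k+1), \ldots\rangle$ is again a reduction of $\vec{b}$ with $\fr_\mathcal{F}(\vec{a}_{\geq k})\subseteq V$, so minimality forces $\fr_\mathcal{F}(\vec{a}_{\geq k})=V$ for every $k$.

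Using this shift-invariance I would prove that $V$ is a nonempty subalgebra: given $c_1, \ldots, c_n\in V$ and $g\in\mathcal{F}$, I realize $c_1$ by an orderly term on a finite block of $\vec{a}$, then realize $c_2$ on a strictly later block using $\fr_\mathcal{F}(\vec{a}_{\geq k})=V\ni c_2$, and so on, producing non-overlapping blocks in order; orderly composition with $g$ exhibits $g(c_1, \ldots, c_n)$ as a member of $\fr_\mathcal{F}(\vec{a})=V$. By hypothesis $V$ contains an idempotent $e$, and since every orderly term sends $(e, \ldots, e)$ to $e$ (an induction on term complexity), I can carve $\vec{a}$ into consecutive ordered blocks, each realizing $e$ by some orderly term, because $e\in\fr_\mathcal{F}(\vec{a}_{\geq k})$ at every stage. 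This yields $\vec{a}'\leq_\mathcal{F}\vec{a}$ with $\fr_\mathcal{F}(\vec{a}')=\{e\}$; minimality of $|V|$ then forces $V=\{e\}$, so $\vec{a}$ already has singleton $\fr$, and Theorem \ref{finitealg} gives that $(A, \mathcal{F})$ is Ramsey.

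The main obstacle is the closure argument showing $V$ is a subalgebra: a priori the elements of $\fr_\mathcal{F}(\vec{a})$ are realized on possibly overlapping finite subsequences, so they cannot simply be fed into $g$ as inputs to an orderly term. The shift-invariance $\fr_\mathcal{F}(\vec{a}_{\geq k})=V$, extracted from minimality of $|V|$, is exactly what allows the realizations of $c_1, \ldots, c_n$ to be placed on disjoint, correctly ordered blocks so that a single orderly composition is available. I would take care to verify the two supporting inductions explicitly, namely monotonicity of $\fr_\mathcal{F}$ under reduction and the propagation of the idempotence of $e$ through all of $\ot(\mathcal{F})$.
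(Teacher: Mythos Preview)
The paper does not prove this theorem; it merely quotes it from \cite{teh2016ramsey} (as part of Theorem~3.9 there), exactly as it does for Theorem~\ref{finitealg}. So there is no in-paper proof to compare against.

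Your argument is correct and is the natural way to pass between the singleton characterization (Theorem~\ref{finitealg}) and the idempotent characterization. The forward direction is clean. In the converse, the key step---using minimality of $|\fr_\mathcal{F}(\vec{a})|$ to force $\fr_\mathcal{F}(\vec{a}_{\geq k})=V$ for every tail, and then exploiting this shift-invariance to realize $c_1,\ldots,c_n$ on disjoint, ordered blocks so that $g(c_1,\ldots,c_n)\in V$---is exactly the right idea and is carried out properly. One small redundancy: once you have built $\vec{a}'\leq_\mathcal{F}\vec{a}\leq_\mathcal{F}\vec{b}$ with $\fr_\mathcal{F}(\vec{a}')=\{e\}$, you are already done by Theorem~\ref{finitealg}; the final appeal to minimality to deduce $V=\{e\}$ is not needed (though it is not wrong either).
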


Each $\mathcal{A}_n$ as well as each $\mathcal{A}_n\setminus\mathcal{A}_m$ is Ramsey because every natural number is idempotent for $\mathcal{D}$. As such, we have seen that the direct limit $\mathcal{D}$ is not a Ramsey algebra, so aren't the subalgebras $\mathcal{D}\setminus\mathcal{A}_n$ that are isomorphic copies of it, but the remaining subalgebras $\mathcal{A}_n$ and $\mathcal{A}_n\setminus\mathcal{A}_m$ are Ramsey algebras. In summary, we have seen that the ring $\mathbb{Z}$, interpreted as a model $(\mathbb{Z}, +, \times)$ of a purely functional language, is not Ramsey because it has no nontrivial subalgebras that are Ramsey, and $\mathcal{D}$ is not Ramsey despite of the fact that all of its subalgebras that are not isomorphic to it are Ramsey. One final example from octonions to show that if an algebra fails to be Ramsey, then it must already have subalgebras that are not Ramsey. It is shown in \cite{teohoctonions} that the real octonions $(\mathbb{O}, \cdot)$ is not a Ramsey algebra under multiplication. Although the real octonions considered as a normed division algebra has only proper subalgebras isomorphic to $\mathbb{R}$, $\mathbb{C}$, and the quaternions $\mathbb{H}$, there are more subalgebras when considered as a binary system under multiplication alone. Take for instance the subalgebra $\mathcal{B}=(B, \cdot\upharpoonright B)$ generated by the set
\begin{equation*}
    B=\left\{b_n:n\in\omega, b_n=\sum_{i=0}^7 2^{2^{8n+1+i}}e_i\right\},
\end{equation*}
where $e_0, e_1, \ldots, e_7$ are the eight unit octonions whose exact indexing is irrelevant for our purpose. Then, $\mathcal{B}$ is a proper subalgebra of $(\mathbb{O}, \cdot)$ because, for each of the elements of $B$, the coefficients of the unit octonions are rational while $\left(\sum_{i=0}^7\sqrt{2}e_i\right)\not\in B$. In \cite{teohoctonions}, it is shown that the sequence $\vec{b}=\langle b_0, b_1, \ldots\rangle$ and some set $X$ together form a witness to the fact that $(\mathbb{O}, \cdot)$ is not a Ramsey algebra. The set $X$ has the property that, given and $\vec{a}\leq_{\{\cdot\}}\vec{b}$ and any natural numbers $i<j<k$, we have $\vec{a}(i)\cdot(\vec{a}(j)\cdot\vec{a}(k))\in X$ and $(\vec{a}(i)\cdot\vec{a}(j))\cdot\vec{a}(k)\in X^C$. Since $\fr_{\{\cdot\}}(\vec{b})\subseteq B$, we now see that $\vec{b}$, along with $X$, acts as a witness to the fact that $\mathcal{B}$ is not Ramsey. As such, the question posed at the beginning of this section remains open, but we yet see another evidence that the answer could be in the affirmative.

\section{Conclusion}
The results above answer a number of structural questions pertaining to Ramsey algebras. Since isomorphic algebras are essentially the same algebra, it came as no surprise that isomorphic algebras have the same Ramsey algebraic property. On the other hand, elementary equivalence is a first-order property and the invariance of Ramsey property breaks down in the face of the stronger third-order property of being a Ramsey algebra. This also suggests another line of investigation, namely when does a Ramsey algebra have a first-order characterization? Hindman's theorem states that semigroups are Ramsey algebra and being a semigroup is a first-order property. On the contrary, the characterization theorem, Theorem \ref{unaryS}, is not a first-order statement as we saw by means of an example, which is contained in the proof of Theorem \ref{firstcompactness}.

We also answered a long-standing question in the subject as to whether Cartesian products of Ramsey algebras must be Ramsey and the answer turns out to be in the negative, something expected in hindsight. It will now be interesting to investigate if one can always reduce an infinite product to yield a reduced product that is Ramsey, an investigation towards a characterization of the filters that would do the job. The result above on the ultrapowers of the predecessor algebra somehow show that this might not come by easy.

Apart from our results on subalgebras, most of the other theorems we considered involved enlarging a given algebra. Cartesian products are enlargements, so are direct limits. Answering the question about elementarily equivalent algebras, we constructed the ultrapower; alternatively, we also constructed, by way of the compactness theorem, an extension of the given Ramsey algebra. As we have seen, the enlargements that accompany these results do not exhibit Ramsey property. In summary, going from a Ramsey algebra to a larger algebra does not necessarily preserve Ramseyness. This runs in stark contrast with the usual Ramseyan theme, where larger domains of a structure that exhibits Ramseyan properties would remain Ramsey. A remedy, if we would, is to shift perspective from the universe of the algebra to the countable sequences of its universe as they pertain to the definition of a Ramsey algebra. To be precise, consider a sequence $\vec{b}$ for which one is interested to find a homomgeneous reduction $\vec{a}$. If $\vec{b}$ admits a homogeneous reduction $\vec{a}$, then any ``enlarged" sequence $\vec{b}'$ that contains $\vec{b}$ as a subsequence will have $\vec{a}$ as a homogeneous reduction. Thus, the proper ``domain" of enlargement lies in sequences rather than the domain of the algebra in question. At any rate, a further understanding of this issue would be illuminating.

Finally, one nagging question remains: For algebras with an abundance of nontrivial subalgebras, if every subalgebra is Ramsey, must the algebra be Ramsey? One is free to interpret the term ``abundance" in this question.

\section{Acknowledgment}
I would like to thank Xianghui Shi for raising the question of whether enlargements of a Ramsey algebra will remain Ramsey. I also extend my utmost gratitude to Wen Chean Teh for constantly suggesting improvements on writing proofs. The painstaking effort taken by Noor Atinah Ahmad, Kai Lin Ong, and Azhana Ahmad in reading the manuscript and providing valuable feedback is an aspect in the preparation of the manuscript that I truly cherish. Special thanks also goes to Qiaochu Yuan for giving the example algebra $\mathcal{Z}$ above. Finally, I dedicate this paper to my late father Chin Seng Teoh, forever in fond memories.

\end{document}